\documentclass[a4paper,leqno]{article}
\linespread{1.165}
\usepackage[headings]{fullpage}

\usepackage{amssymb,amsmath,amstext,amsthm,mathtools}
\usepackage{enumerate,url,comment}

\usepackage{cmbright}

\usepackage[colorlinks=true,urlcolor=cyan,linkcolor=blue,citecolor=magenta]{hyperref}


\usepackage{xpatch}
\xpatchcmd{\qed}{\hfill}{}{}{}
\xpatchcmd{\qed}{\quad}{\qquad}{}{}

\newtheorem{theorem}{Theorem}
\newtheorem{proposition}[theorem]{Proposition}
\newtheorem{lemma}[theorem]{Lemma}
\newtheorem{corollary}[theorem]{Corollary}
\newtheorem{conjecture}[theorem]{Conjecture}
\newtheorem{question}[theorem]{Question}

\theoremstyle{definition}
\newtheorem{definition}[theorem]{Definition}
\newtheorem{remark}[theorem]{Remark}

\newcommand{\lk}{\mathrm{lk}}
\newcommand{\anst}{\mathrm{ast}}
\newcommand{\aff}{\mathrm{Aff}}
\newcommand{\conv}{\mathrm{conv}}
\newcommand{\vertices}{\mathrm{Vert}}
\newcommand{\st}{\mathrm{st}}

\newcommand{\lex}{\mathrm{Lex}}

\newcommand{\set}[2]{\left\{ #1\,\middle|\, #2\right\}}

\newcommand{\gc}{g^c}
\newcommand{\hc}{h^c}
\newcommand{\hsc}{h^{sc}}
\newcommand{\gsc}{g^{sc}}
\newcommand{\dhalf}[1]{\left\lfloor #1/2\right\rfloor}

\newcommand{\R}{\mathbb{R}}

\newcommand{\mw}{{\sf MW}}

\def\mchoose#1#2{\ensuremath{\left(\kern-.3em\left(\genfrac{}{}{0pt}{}{#1}{#2}\right)\kern-.3em\right)}}

\newcommand{\vfig}[2]{{\raisebox{.1em}{$#1$}\left/\raisebox{-.1em}{$#2$}\right.}}


\title{On the cone of $f$-vectors of cubical polytopes}

\author{Ron M. Adin\\
{\it Department of Mathematics, Bar-Ilan University}\\
\url{radin@math.biu.ac.il}
\and Daniel Kalmanovich\\
{\it Einstein Institute of Mathematics, The Hebrew University of Jerusalem}\\
\url{daniel.kalmanovich@gmail.com}
\and Eran Nevo\\
{\it Einstein Institute of Mathematics, The Hebrew University of Jerusalem}\\
\url{nevo.eran@gmail.com}
}

\date{April 23, 2018}

\usepackage{fancyhdr}
\pagestyle{fancy}
\lhead{R.~M.~Adin, D.~Kalmanovich and E.~Nevo}
\rhead{On the cubical $g$-vector}

\begin{document}

\maketitle

\begin{abstract}
	What is the minimal closed cone containing all $f$-vectors of cubical $d$-polytopes?
	We construct cubical polytopes showing that this cone, expressed in the cubical $g$-vector coordinates, contains the nonnegative $g$-orthant,
	thus verifying one direction of the Cubical Generalized Lower Bound Conjecture of Babson, Billera and Chan. Our polytopes also show that a natural cubical analogue of the simplicial Generalized Lower Bound Theorem does not hold.
\end{abstract}

\section{Introduction}
Understanding the possible face numbers of polytopes, and of subfamilies of interest, is a fundamental question, dealt with since antiquity.
The celebrated $g$-theorem, conjectured by McMullen~\cite{McMu71} and proved by Stanley~\cite{Stan80} (necessity) and by Billera and Lee~\cite{BillL81} (sufficiency), characterizes the $f$-vectors of simplicial polytopes.
Here we consider $f$-vectors of cubical polytopes;
a $d$-polytope $Q$ is \emph{cubical} if all its facets are combinatorially isomorphic to the $(d-1)$-cube.
Adin~\cite{Adin96} proved analogues of the Dehn--Sommerville relations for cubical polytopes,
thus encoding the $f$-vector of $Q$ by its (long) cubical $g$-vector
\[g^c(Q)=\left(g_1^c(Q), g_2^c(Q), \dots, g_{\dhalf{d}}^c(Q)\right)\]
(with the constant $g_0^c(Q)=2^{d-1}$ omitted).
The construction of neighborly cubical $d$-polytopes by Joswig and Ziegler~\cite{JoswZ00}, where the number of vertices varies, shows that the linear span of the vectors $g^c(Q)$, over all cubical $d$-polytopes, is the entire vector space $\mathbb{R}^{\lfloor d/2\rfloor}$.
Adin~\cite[Question 2]{Adin96} asked whether $g^c(Q)$ is always in the nonnegative orthant, and Babson, Billera and Chan~\cite[Conjecture 5.2]{BabsBC97} conjectured further that the minimal closed cone $\mathcal{C}_d$ containing all the vectors $g^c(Q)$ corresponding to cubical $d$-polytopes is exactly this nonnegative orthant $\mathcal{A}_d$.

Denote by $e_i$ the $i$-th unit vector in $\mathbb{R}^{\lfloor d/2\rfloor}$. Stacked cubical polytopes show that the ray spanned by $e_1$ is in $\mathcal{C}_d$, and neighborly cubical polytopes show that the ray spanned by $e_{\lfloor d/2\rfloor}$ is in $\mathcal{C}_d$.
Our main result is that all the rays spanned by the vectors $e_i$ are in $\mathcal{C}_d$. Thus

\begin{theorem}\label{thm:cones}
$\mathcal{A}_d \subseteq \mathcal{C}_d$.
\end{theorem}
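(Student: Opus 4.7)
My plan is to prove $\mathcal{A}_d \subseteq \mathcal{C}_d$ by exhibiting, for each $i \in \{1, \ldots, \lfloor d/2 \rfloor\}$, a sequence of cubical $d$-polytopes whose cubical $g$-vectors, once normalized, converge to $e_i$. Since $\mathcal{C}_d$ is a closed convex cone and the $e_i$ generate $\mathcal{A}_d$ as a cone, this suffices. The rays $e_1$ and $e_{\lfloor d/2\rfloor}$ are already provided by stacked and neighborly cubical polytopes respectively, so the new work lies in the intermediate cases $1 < i < \lfloor d/2\rfloor$.

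The natural candidate for each intermediate $i$ is a product-type family: start from a neighborly cubical $2i$-polytope $P_n$ (from the Joswig--Ziegler construction) with $n$ vertices, and form $P_n \times C^{d-2i}$, where $C^{d-2i}$ is the standard $(d-2i)$-cube. Such a product is automatically a cubical $d$-polytope, and since $g^c(P_n)$ in dimension $2i$ is dominated by its last (i.e.\ $i$-th) coordinate as $n\to\infty$ (this is exactly what makes the extreme case $i = \lfloor d/2 \rfloor$ work in dimension $d$), one hopes that after embedding into dimension $d$ via the product, the dominant $g$-coordinate still sits at index $i$. To make this precise I would first derive a product formula expressing $g^c(P \times Q)$ as a bilinear function of $g^c(P)$ and $g^c(Q)$; this should follow from the multiplicativity of $f$-vectors under Cartesian product together with Adin's cubical Dehn--Sommerville relations. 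I would then substitute the known asymptotics of $g^c(P_n)$ into this formula and read off the limiting direction.

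The main obstacle is that a product of this form will typically contribute to all coordinates $g_j^c$ with $j \leq i$, not merely to $g_i^c$, and a priori these contributions need not be $o(g_i^c)$. To reach the pure direction $e_i$, the construction will likely have to be refined: for instance by modifying one factor (by capping, wedging, or taking a subcomplex that preserves cubicality) so that the low-order coordinates are killed, or by choosing the family parameters (the scaling of $n$ as $i$ and $d$ vary) so that the dominant growth really is concentrated at index $i$. The bulk of the technical work should lie in verifying this quantitative control, namely that every coordinate $g_j^c$ with $j < i$ is of strictly smaller order than $g_i^c$ along the chosen sequence; note that we cannot use the already-established rays $e_1, \ldots, e_{i-1} \in \mathcal{C}_d$ to \emph{subtract} off unwanted lower contributions, since $\mathcal{C}_d$ is only a cone, so the construction must deliver the isolated direction $e_i$ in the limit by itself.
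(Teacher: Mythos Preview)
Your proposal has a concrete error at its core: the product $P_n \times C^{d-2i}$ is \emph{not} a cubical $d$-polytope when $d-2i \ge 1$. The facets of a product $P \times Q$ are of the form $F \times Q$ with $F$ a facet of $P$, and $P \times G$ with $G$ a facet of $Q$. Taking $Q = C^{d-2i}$, a facet $G$ of $Q$ is a $(d-2i-1)$-cube, and the corresponding facet $P_n \times G$ of the product is combinatorially $P_n \times C^{d-2i-1}$. This is a cube only if $P_n$ itself is a cube; but a neighborly cubical $2i$-polytope on more than $2^{2i}$ vertices is never a cube. So the sentence ``such a product is automatically a cubical $d$-polytope'' is simply false, and the construction does not even enter the class whose $f$-vector cone you are trying to describe.

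Even setting that aside, the remainder of the proposal is not a proof but a wish list: you correctly identify that the product would mix contributions in all coordinates $g^c_j$ with $j \le i$, but you do not propose any mechanism (capping, wedging, etc.) that would actually suppress the lower-order terms, nor do you indicate why such a modification would remain cubical or would not destroy the dominant $g^c_i$ term. The paper avoids both difficulties by a completely different route: it applies the Sanyal--Ziegler projection construction to a carefully chosen \emph{simplicial} $(d-2)$-polytope, namely the $k$-neighborly $k$-stacked McMullen--Walkup polytope $\mw(2k,d-2,n-1)$, obtaining a genuine cubical $d$-polytope $Q(k,d,n)$. It then computes $g^c(Q(k,d,n))$ exactly via Hetyei's observation and an analysis of the lexicographic diamonds, showing that $g^c_i(Q)=0$ for $i>k+1$ and that $g^c_{k+1}$ dominates as $n\to\infty$. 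The isolation of the ray $e_{k+1}$ thus comes from the structure of the input simplicial polytope (neighborly and stacked at the same level), not from any post-hoc cancellation.
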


The conjecture of Adin and Babson--Billera--Chan is
\begin{conjecture}\label{conj:cones}
$\mathcal{A}_d = \mathcal{C}_d$.
\end{conjecture}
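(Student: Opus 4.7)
Since $\cC_d$ is a closed convex cone, proving $\mathcal{A}_d\subseteq\cC_d$ reduces to showing that each coordinate ray $\R_{\ge 0}\, e_i$, for $i=1,\ldots,\dhalf{d}$, lies in $\cC_d$. As already noted in the introduction, stacked cubical polytopes handle $i=1$ and the Joswig--Ziegler neighborly cubical polytopes handle $i=\dhalf{d}$, so the real content of the theorem is the intermediate range $1<i<\dhalf{d}$.

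For each such $i$, the plan is to exhibit a sequence of cubical $d$-polytopes $Q_n$ with the asymptotic property
\[
\gc_i(Q_n)\to\infty,\qquad \gc_j(Q_n)/\gc_i(Q_n)\to 0\ \text{ for every }j\neq i.
\]
Because $\cC_d$ is closed under nonnegative scaling and under limits, this places $e_i$ in $\cC_d$ and finishes the proof. Thus the task reduces to a construction problem: produce cubical $d$-polytopes in which the $i$-th $\gc$-coordinate grows strictly faster than every other one, while all $\gc_j$ stay nonnegative along the way.

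The natural seed is a neighborly cubical $2i$-polytope $N_n$ of Joswig--Ziegler type, whose $\gc$-vector is concentrated asymptotically on its top coordinate $\gc_i$. To lift the example from dimension $2i$ to dimension $d$ while preserving cubicality, I would take an iterated prism $Q_n=N_n\times I^{d-2i}$ (products of cubical polytopes with an interval are cubical), possibly followed by cubical cappings of facets, which are expected to contribute only to $\gc_1$ and so can be used to tune the lower coordinates. The verification then splits into three parts: (a) express $\gc(P\times I)$ in terms of $\gc(P)$, by combining the standard $f$-vector formula for prisms with the cubical Dehn--Sommerville relations encoded in $\gc$; (b) iterate to extract the leading-order growth in $n$ of each $\gc_j(Q_n)$; (c) check that $\gc_i$ strictly dominates and that every $\gc_j$ is nonnegative.

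The principal obstacle is step (a). The prism operation is not diagonal in the $\gc$-basis, and the off-diagonal mixing could a priori inflate some $\gc_j$ with $j>i$ at the same rate as $\gc_i$, destroying the hoped-for convergence to $e_i$. Should this happen, the construction must be refined: for example, by replacing the plain prism by a more delicate combination of products of several carefully chosen neighborly cubical polytopes of lower dimension, or by interleaving cappings with prisms so that the unwanted growth in coordinates $\gc_j$, $j\neq i$, is cancelled or dominated by lower-order terms coming from the neighborly base. Locating a construction for which the leading-order asymptotics land on $\R_{\ge 0}\, e_i$ for every intermediate $i$, and carrying out the estimate rigorously, is where the technical heart of the proof resides.
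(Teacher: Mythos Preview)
First, a point about the target: Conjecture~\ref{conj:cones} is \emph{not} proved in the paper. Only one inclusion, $\mathcal{A}_d\subseteq\mathcal{C}_d$ (Theorem~\ref{thm:cones}), is established; the reverse inclusion $\mathcal{C}_d\subseteq\mathcal{A}_d$ is the open nonnegativity question of Adin and Babson--Billera--Chan. Your proposal silently drops the reverse inclusion and argues only for $\mathcal{A}_d\subseteq\mathcal{C}_d$, so at best it is a candidate proof of Theorem~\ref{thm:cones}, not of the conjecture.

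Even restricted to Theorem~\ref{thm:cones}, the argument has a fatal gap at the construction step. You assert that ``products of cubical polytopes with an interval are cubical'' and build $Q_n=N_n\times I^{d-2i}$. This is false: if $P$ is a cubical $d$-polytope that is not itself a cube, then $P\times I$ has $P\times\{0\}$ and $P\times\{1\}$ as facets, and these are combinatorially $P$, not $(d)$-cubes. Hence $P\times I$ is not cubical. In particular the Joswig--Ziegler neighborly cubical $2i$-polytopes $N_n$ are not cubes for $n>2i$, so $N_n\times I^{d-2i}$ is not a cubical $d$-polytope, and the whole asymptotic analysis in steps (a)--(c) never gets off the ground. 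The subsequent discussion of how the $\gc$-basis might behave under prisms is therefore moot. Cappings do not rescue this, since they presuppose a cubical input.

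The paper avoids this problem entirely: rather than lifting a low-dimensional cubical polytope, it feeds a carefully chosen \emph{simplicial} $(d-2)$-polytope (the McMullen--Walkup polytope $\mw(2k,d-2,n-1)$, which is $k$-neighborly and $k$-stacked) into the Sanyal--Ziegler projection-of-a-deformed-cube construction, obtaining a genuinely cubical $d$-polytope $Q(k,d,n)$. The $\gc$-vector is then computed exactly via Hetyei's identity and the explicit $g$-vectors of the lexicographic diamonds, and the asymptotics as $n\to\infty$ yield the ray through $e_{k+1}$. If you want to pursue an alternate route, the replacement for the prism must be an honest operation that produces cubical $d$-polytopes; simple products with intervals do not.
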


An analogue of Theorem~\ref{thm:cones} was previously known for the much wider class of PL cubical spheres~\cite[Theorem 5.7]{BabsBC97}.
Also, Conjecture~\ref{conj:cones} holds for $d\leq 5$, by combining the constructions above with Steve Klee's result \cite[Prop.3.7]{Klee11} asserting that $g^c_k(Q)\geq 0$ for any cubical polytope $Q$ of dimension $2k+1$.

Sanyal and Ziegler~\cite{SanyZ10} showed how to construct, from any simplicial $(d-2)$-polytope $P$ on $n-1$ vertices and a total order $v_1<v_2<\ldots <v_{n-1}$ on its vertices, a cubical $d$-polytope $Q=Q(P,<)$ on $2^n$ vertices; it is the projection of a deformed $n$-cube in $\mathbb{R}^n$ onto the last $d$ coordinates.
Further, they showed that if $P$ is $k$-neighborly then
the $k$-skeleton of $Q$ is isomorphic to the $k$-skeleton of the $n$-cube.
We apply their construction to the case where $P_n$ is the $k$-neighborly $k$-stacked $(d-2)$-polytope on $n-1$ vertices constructed by McMullen and Walkup~\cite{McMuW71}, with $1\leq k\leq \lfloor\frac{d-2}{2}\rfloor$, and with a suitable total order $<$. Analyzing the cubical $g$-vectors of the resulting polytopes $Q(k,d,n)=Q(P_n,<)$, as $n$ tends to infinity, gives~\autoref{thm:cones}. See Theorem~\ref{t:gc_Q} and Corollary~\ref{t:asymptotics} for the exact values and asymptotic behavior of the cubical $g$-vectors.

The generalized lower bound theorem for simplicial polytopes (GLBT), conjectured by McMullen and Walkup~\cite{McMuW71} and proved by Murai and Nevo~\cite{MuraN13}, asserts that for $1\le k<\lfloor\frac{d}{2}\rfloor$, a simplicial $d$-polytope $P$ is $k$-stacked if and only if $g_{k+1}(P)=0$.
The polytopes $Q(k,d,n)$ demonstrate that the natural cubical analogue of the GLBT does not hold:
\begin{theorem}\label{thm:IntroNoCGLBT}
	For any $k\ge 1$ and $n\ge d\ge 2k+4$, we have $\gc_{k+2}(Q(k,d,n))=0$, and $Q(k,d,n)$ is not cubical $(k+1)$-stacked.
\end{theorem}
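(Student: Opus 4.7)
The vanishing $\gc_{k+2}(Q(k,d,n))=0$ should be a direct computation from the explicit formulas announced in Theorem~\ref{t:gc_Q}. My plan is to write out that formula at index $k+2$ and verify the identity in closed form (not merely asymptotically, as is needed to derive Theorem~\ref{thm:cones}). Conceptually, the $k$-neighborliness of $P_n$ forces, via Sanyal--Ziegler, the $k$-skeleton of $Q(k,d,n)$ to coincide with that of the $n$-cube, hence pinning $f_0,\ldots,f_k$ of $Q(k,d,n)$ to the face numbers of the $n$-cube; the further assumption that $P_n$ is $k$-stacked contributes the additional relation on $f_{k+1}$ that drives $\gc_{k+2}$ to zero after passage through Adin's cubical Dehn--Sommerville relations.

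For the claim that $Q(k,d,n)$ is not cubical $(k+1)$-stacked, I would argue by contradiction. Suppose $Q=Q(k,d,n)$ is cubical $(k+1)$-stacked in the sense of Babson--Billera--Chan. Then $Q$ bounds a cubical $d$-ball $B$ whose interior faces all have dimension at least $d-k-2$; in particular $B$ and $Q$ share the same $(k+1)$-skeleton and every $(k+1)$-face of $B$ is a combinatorial $(k+1)$-cube. I would then try either (a) to exhibit an explicit $(k+1)$-face of $Q(k,d,n)$---most naturally one arising from the projection of a $(k+1)$-cube of the source $n$-cube---whose star incidences inside $Q$ are incompatible with such a filling, or (b) to argue by counting: compute the $f$-vector $B$ would need to have in terms of $(k,d,n)$ using the cubical stacked formulas, and compare against the known $f$-vector of $Q(k,d,n)$ obtained from Theorem~\ref{t:gc_Q}.

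The chief obstacle is the second half. The first half is a bookkeeping exercise once the $\gc$-formula is in hand, whereas ruling out cubical $(k+1)$-stackedness has no obvious algebraic shortcut: the cubical analogue of the $g$-theorem is only partially established (via Klee's nonnegativity in odd dimensions), and simplicial rigidity-style arguments do not transfer cleanly to the cubical category. I therefore anticipate that the proof leans on explicit features of the Sanyal--Ziegler projection---for instance, a careful accounting of which $(k+1)$-cubes of the source $n$-cube survive as faces of $Q(k,d,n)$ versus those that are lost or deformed---to isolate a concrete combinatorial obstruction to the hypothetical ball $B$.
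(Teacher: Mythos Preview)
Your handling of the first claim is fine and matches the paper: the vanishing $\gc_{k+2}(Q)=0$ is exactly the content of Theorem~\ref{t:gc_Q}, obtained by computing $\gsc(Q)$ through Hetyei's observation and the diamond $g$-vectors, then converting to $\gc$ via a binomial identity (Lemma~\ref{t:binomial}).

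For the second claim there is a genuine gap. Your strategy~(b) cannot work: the equation $\gc_{k+2}(Q)=0$ is precisely the $f$-vector identity that any cubical $(k+1)$-stacked polytope satisfies (this is the easy direction, \cite[proof of Proposition~5.5]{BabsBC97}), so there is no numerical obstruction to extract---the whole point of the theorem is that the numerical necessary condition holds while the combinatorial conclusion fails. Your strategy~(a) gestures in the right direction but is missing the mechanism that makes it go.

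The paper's argument passes to vertex figures, where the problem becomes \emph{simplicial} and the simplicial GLBT supplies rigidity. Concretely: a hypothetical cubical $(k+1)$-stacked subdivision $Q'$ of $Q$ has no interior faces of dimension at most $d-k-2$; since $d\ge 2k+4$ this rules out interior edges, forcing every cell of $Q'$ to be a subcube of the ambient $n$-cube (Lemma~\ref{lem:cubulationBySubcubes}). Taking the link at a vertex $v$, one gets a simplicial $(k+1)$-stacked triangulation of the diamond $D_a\cong\vfig{Q}{v}$. By the uniqueness half of the simplicial GLBT this triangulation $S(D_a)$ is completely determined by $D_a$. The paper then pins down the missing $(k+1)$- and $(k+2)$-faces of each $D_a$ (Proposition~\ref{prop:missing_faces}), reads off the facets of $S(D_a)$ (Proposition~\ref{prop:d-faces}), and finally exhibits two adjacent vertices $v_\sigma,v_{\sigma'}$ of $Q$ with $\vfig{Q}{v_\sigma}\cong D_a$ and $\vfig{Q}{v_{\sigma'}}\cong D_b$, $b>a$, such that a type~(II) facet of $S(D_a)$ names a $d$-cube of $Q'$ containing both vertices, yet the corresponding $d$-set is not a facet of $S(D_b)$. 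This incompatibility is the contradiction.

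What your outline lacks is the reduction to simplicial links and the invocation of GLBT uniqueness to remove all freedom from the hypothetical cubulation. Without first knowing that the local triangulation at each vertex is forced, the phrase ``star incidences are incompatible with such a filling'' has no traction, because a priori the filling could adjust itself near any given face.
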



The paper is organized as follows. Basic definitions and notation are given in~\autoref{sec:preliminaries}. In~\autoref{sec:MW} we define our variant of the McMullen--Walkup polytopes. A sketch of the Sanyal--Ziegler construction is given in~\autoref{sec:SZ}. In~\autoref{sec:Q} we construct the polytopes $Q(k,d,n)$ mentioned above and analyze their cubical $g$-vector. In~\autoref{sec:stack} we prove Theorem~\ref{thm:IntroNoCGLBT}, showing that $Q(k,d,n)$ is not $(k+1)$-stacked. The final~\autoref{sec:concluding_remarks} concludes with remarks and open questions.

\subsection*{Acknowledgments}

The authors thank Michael Joswig and Isabella Novik for helpful comments.
The research of Adin was supported by an MIT-Israel MISTI grant. He also thanks the Israel Institute for Advanced Studies, Jerusalem, for its hospitality during part of the work on this paper.
The research of Kalmanovich and Nevo was partially supported by Israel Science Foundation grant ISF-1695/15 and by grant 2528/16 of the ISF-NRF Singapore joint research program.
This work was also partially supported by the National Science Foundation under Grant No.\ DMS-1440140, while Nevo was in residence at the Mathematical Sciences Research Institute in Berkeley, California, during the Fall 2017 semester.

An extended abstract version of this paper will appear in the FPSAC 2018 proceedings. We thank the FPSAC 2018 referees for their comments on that extended abstract.

\section{Preliminaries}\label{sec:preliminaries}

The purpose of this section is mainly to set the notation that we will use throughout the paper. For undefined terminology we refer the reader to~\cite{Zieg95}. A {\bf $d$-dimensional polytope} $P$ is the convex hull of a finite set of points in $\R^d$ which affinely span $\R^d$. A (proper) {\bf face} $\sigma$ of $P$ is the intersection of $P$ with one of its supporting hyperplanes, the {\bf dimension} $\dim\sigma$ of $\sigma$ is then the dimension of the affine span of that intersection. The faces of dimensions $0$, $1$, and $d-1$ are called {\bf vertices}, {\bf edges}, and {\bf facets}, respectively. The empty set $\emptyset$ and the polytope $P$ itself are called {\bf trivial faces} and have dimensions $-1$ and $d$, respectively. We will abbreviate and write $d$-polytope and $i$-face to denote dimension.
We denote by $\vertices(P)$ the set of vertices of $P$, and for a vertex $v\in\vertices(P)$, we denote by $\vfig{P}{v}$ the {\bf vertex figure} of $P$ at $v$, that is, $\vfig{P}{v}$ is a $(d-1)$-polytope obtained as the intersection of $P$ with a hyperplane which strictly separates $v$ from $\vertices(P)\setminus\{v\}$; the face lattice of $\vfig{P}{v}$ does not depend on the seperating hyperplane chosen.

A {\bf polytopal complex} $K$ is a finite collection of polytopes in $\R^d$ such that
\begin{enumerate}[(i)]
\item the empty polytope is in $K$,
\item if $P\in K$ then all faces of $P$ are also in $K$,
\item if $P,Q\in K$ then $P\cap Q$ is a face of both $P$ and $Q$.
\end{enumerate}
The {\bf dimension} $\dim K$ of $K$ is the maximum of $\dim P$ over all $P\in K$; we say that $K$ is a $\dim K$-complex. 
The elements in $K$ are called {\bf faces}; the faces of dimension $\dim K$ are called {\bf facets}.
For $F\in K$ we define the (open) {\bf star} of $F$ and the {\bf antistar} of $F$, respectively, by
\begin{align*}
\st_F(K)&=\set{G\in K}{F\subseteq G},\\
\anst_F(K)&=\set{G\in K}{F\nsubseteq G}.
\end{align*}
The number of $i$-faces in $K$ is denoted by $f_i(K)$, and the {\bf $f$-vector} of $K$ is $f(K)=\left(f_0(K),f_1(K),\dots ,f_{\dim K}(K)\right)$. The {\bf $f$-polynomial} of $K$ is defined by
\[
f(K,t)=\sum_{i=0}^{\dim K+1} f_{i-1}(K)t^i,
\]
where $f_{-1}(K) = 1$ for any nonempty $K$.

For a polytope $P$ we denote by $\langle P\rangle$ the complex of all faces of $P$. The {\bf boundary complex} $\partial P$ is the complex formed by all the proper faces of $P$, that is $\partial P=\langle P\rangle\setminus \{P\}$. We also define the $f$-vector and $f$-polynomial of $P$ by $f(P) = f(\partial P)$ and $f(P,t) = f(\partial P,t)$.
We use $\lk_v(P)$ to denote the boundary complex $\partial \left(\vfig{P}{v}\right)$ of the vertex figure of $P$ at $v$.

\subsection{Simplicial complexes and polytopes}

A {\bf simplicial complex} is a polytopal complex in which all polytopes are simplices. Let $K$ be a simplicial $(D-1)$-complex; the {\bf $h$-polynomial} of $K$ is defined by
\begin{equation*}
\begin{aligned}
h(K,t)
&=h_0(K)+h_1(K)t+\dots +h_D(K)t^D \\
&:=(1-t)^D\cdot f\left(K,\frac{t}{1-t}\right),
\end{aligned}
\end{equation*}
and the {\bf $h$-vector} of $K$ is $(h_0(K),\dots ,h_D(K))$. If $K=\partial P$ for a simplicial $D$-polytope $P$ then the {\bf Dehn--Sommerville relations} assert that $h_i(K)=h_{D-i}(K)$ for any $0\le i\le D$.
The corresponding {\bf $g$-vector} $\left(g_0(K),\dots ,g_{\dhalf{D}}(K)\right)$ of $K$ is then defined by
\begin{equation*}
\begin{aligned}
g_0(K)&= h_0(K) = 1,\\
g_i(K)&=h_i(K)-h_{i-1}(K),\quad\text{for all}\quad 1\leq i\leq\dhalf{D}.
\end{aligned}
\end{equation*}

For two simplicial complexes $K$ and $L$ we define the {\bf join} $K\ast L$ to be the simplicial complex whose simplices are the disjoint unions of simplices of $K$ and simplices of $L$.

A polytope is {\bf simplicial} if each of its proper faces is a simplex.
For a simplicial polytope $P$ we write $h(P,t)$ to mean $h(\partial P,t)$, and similarly $h_i(P):=h_i(\partial P)$ and $g_i(P):=g_i(\partial P)$.

A simplicial $d$-polytope $P$ is called {\bf $k$-stacked} if $P$ has a triangulation in which there are no interior faces of dimension less than $d-k$. A simplicial polytope $P$ is called {\bf $k$-neighborly} if each subset of at most $k$ vertices forms the vertex set of a face of $P$. We denote by $C(d,n)$ the {\bf cyclic $d$-polytope with $n$ vertices}:
$$C(d,n) := \conv\left\{x(t_1), x(t_2). . . , x(t_n)\right\},$$
where $t_1 < t_2 <\dots < t_n$ and $x(t) := \left(t,t^2,\dots ,t^d\right)$ is the moment curve in $\R^d$.

\subsection{Cubical complexes and polytopes}

A {\bf cubical complex} is a polytopal complex in which all polytopes are combinatorially isomorphic to cubes. Let $Q$ be a cubical $(d-1)$-complex, the {\bf short cubical $h$-polynomial} is defined by
\[\hsc(Q,t)=
\sum_{i=0}^{d-1} \hsc_i(Q) t^i
= \sum_{j=0}^{d-1} f_j(Q) (2t)^j (1-t)^{d-1-j}.
\]
When $Q$ is clear from the context, we may sometimes drop it from the notation, as we do in the following few definitions.
The {\bf (long) cubical $h$-vector} $(\hc_0,\hc_1,\dots ,\hc_d)$ is defined by
\begin{align*}
\hc_0&=2^{d-1},\\
\hsc_i&=\hc_i+\hc_{i+1},\quad\text{for}\quad 0\leq i\leq d-1,
\end{align*}
and the corresponding ({\bf short} and {\bf long}) {\bf cubical $g$-vector}s are defined, as in the simplicial case, by
\begin{align*}
\gsc_0=\hsc_0=f_0,& &  \gsc_i&=\hsc_i -\hsc_{i-1}\quad\text{for } 1\leq i\leq\dhalf{(d-1)}; \\
\gc_0=\hc_0=2^{d-1},& &  \gc_i&=\hc_i -\hc_{i-1}\quad\text{for } 1\leq i\leq\dhalf{d}.
\end{align*}

A polytope is {\bf cubical} if each of its proper faces is combinatorially a cube.
Adin~\cite{Adin96} showed that any cubical $d$-polytope $Q$ satisfies an analogue of the Dehn-Sommerville relations: $\hc_i(Q)=\hc_{d-i}(Q)$ for all $0\le i\le d$.

In analogy with the simplicial case, \cite{BabsBC97} defined cubical neighborliness and cubical stackedness: a cubical $d$-polytope is {\bf $k$-neighborly} if it has the $k$-skeleton of a cube (of some dimension);
it is {\bf $k$-stacked} if it has a cubical subdivision with no interior faces of dimension less than $d-k$.

Each vertex figure in a cubical $d$-polytope $P$ is a simplicial $(d-1)$-polytope;  we finish this section with the relation known as {\bf Hetyei's observation}:
\begin{equation}\label{eq:Hetyei}
\hsc(P,t):=\hsc(\partial P,t)=\sum_{v\in \vertices(P)}h(\lk_v(P),t).
\end{equation}
It shows that the cubical Dehn--Sommerville relations follow from the simplicial ones.

\section{The McMullen--Walkup polytopes}\label{sec:MW}

In section 3 of~\cite{McMuW71}, McMullen and Walkup describe the construction of $k$-neighborly $k$-stacked simplicial $D$-polytopes with $N$ vertices, for any set of parameters satisfying $2\leq 2k\leq D < N$. Their construction takes a $k$-neighborly $2k$-polytope $C$ with $N-D+2k$ vertices (e.g., the cyclic $2k$-polytope with $N-D+2k$ vertices) and a $(D-2k)$-simplex $T$, both lying in $\R^D$ in such a way that the relative interior of $T$ intersects the affine hull $\aff (C)$ in a vertex $x$ of $C$. Then the convex hull $\conv (C\cup T)$ is the desired polytope. We define a slightly more general construction.
\begin{definition}\label{def:MW}
Let $2 \leq K \leq D < N$. Let $C=C(K,N-D+K)$ be the cyclic $K$-polytope with $N-D+K$ vertices, and let $T$ be a $(D-K)$-simplex, both lying in $\R^D$ in such a way that the relative interior of $T$ intersects $\aff (C)$ in a vertex $x$ of $C$. The polytope $\conv(C\cup T)$ is a $D$-dimensional simplicial polytope with $N$ vertices, denoted $\mw(K,D,N;x)$.
\end{definition}

The faces of $\mw(K,D,N;x)$ are of two types:
\begin{enumerate}[(a)]
\item the convex hull of $T$ and a face of $C$ that contains $x$,
\item the convex hull of a proper face of $T$ and a face of $C$ that does not contain $x$.
\end{enumerate}
The boundary complex of $\mw(K,D,N;x)$ is thus described by
\begin{equation}\label{eq:P}
\partial\mw(K,D,N;x)=\langle T\rangle \ast \lk_x(C)\bigcup_{\partial T\ast \lk_x(C)}\partial T\ast \anst_x(C).
\end{equation}

McMullen and Walkup have shown that $\mw(2k,D,N;x)$ is $k$-neighborly and $k$-stacked, thus satisfying
\begin{equation}\label{eq:g_of_P}
g_i(\mw(2k,D,N;x))=\begin{cases}
0, & i> k;\\
\mchoose{N-D-1}{i}=\binom{N-D+i-2}{i}, & i\leq k.
\end{cases}
\end{equation}

The proof~\cite[p.~269]{McMuW71} that $\mw(2k,D,N;x)$ is $k$-neighborly and $k$-stacked actually shows:

\begin{lemma}\label{lem:gMW}
The polytope $\mw(K,D,N;x)$ is $\dhalf{K}$-neighborly and $\dhalf{K}$-stacked. In particular, for $K = 2k-1$,
\begin{equation}
g_i(\mw(2k-1,D,N;x))=\begin{cases}
0, & i> k-1;\\
\mchoose{N-D-1}{i}=\binom{N-D+i-2}{i}, & i\leq k-1.
\end{cases}
\end{equation}
\end{lemma}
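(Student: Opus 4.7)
My plan is to verify the two structural claims of the lemma---$\dhalf{K}$-neighborliness and $\dhalf{K}$-stackedness of $\mw(K,D,N;x)$---separately, and then to deduce the $g$-vector formula from these. Throughout, the main input is the description~(\ref{eq:P}) of $\partial\mw(K,D,N;x)$, together with the standard $\dhalf{K}$-neighborliness of the cyclic polytope $C=C(K,N-D+K)$.

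For $\dhalf{K}$-neighborliness, I would take any $S\subseteq\vertices(\mw(K,D,N;x))$ with $|S|\le\dhalf{K}$ and split $S=S_T\sqcup S_C$ with $S_T\subseteq\vertices(T)$ and $S_C\subseteq\vertices(C)\setminus\{x\}$. If $S_T\subsetneq\vertices(T)$ then $S_T\in\partial T$, and since $|S_C|\le\dhalf{K}$ the neighborliness of $C$ makes $S_C$ a face of $C$ not containing $x$, so $S\in\partial T\ast\anst_x(C)\subseteq\partial\mw$. If $S_T=\vertices(T)$ then $|S_C\cup\{x\}|\le\dhalf{K}-(D-K)\le\dhalf{K}$, so $S_C\cup\{x\}$ is a face of $C$, $S_C\in\lk_x(C)$, and $S\in\langle T\rangle\ast\lk_x(C)\subseteq\partial\mw$.

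For $\dhalf{K}$-stackedness, I would exhibit the triangulation of $\mw(K,D,N;x)$ whose top-dimensional simplices are the joins $T\ast F$ as $F$ ranges over the facets of $\anst_x(C)$. Each such join is a $D$-simplex because $\aff(T)\cap\aff(C)=\{x\}$ and $x\notin F$; a convex-combination argument using $x\in\mathrm{relint}(T)$ and the cone decomposition $C=\bigcup_F(\{x\}\ast F)$ verifies that these $D$-simplices tile $\mw(K,D,N;x)$ and intersect only in common faces. Matching the resulting face complex against~(\ref{eq:P}) identifies the interior faces as exactly those of the form $T\ast F'$ with $F'\in\anst_x(C)$ and $F'\cup\{x\}\notin C$. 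For $K=2k$ the $k$-neighborliness of $C$ forces $|F'|\ge k$, yielding $\dim(T\ast F')\ge(D-K)+k=D-k$. For $K=2k-1$ I would take $x$ to be a terminal vertex of the moment curve defining $C$, so that the vertex figure of $C$ at $x$ is combinatorially $C(2k-2,N-D+2k-2)$ and thus $(k-1)$-neighborly; then $F'\cup\{x\}\notin C$ forces $|F'|\ge k$, yielding $\dim(T\ast F')\ge(D-(2k-1))+k=D-(k-1)$.

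Given the two structural properties, the $g$-vector formula in the $K=2k-1$ case is routine: $(k-1)$-neighborliness gives $h_i(\mw)=\binom{N-D+i-1}{i}$ for $0\le i\le k-1$, hence by Pascal's rule $g_i(\mw)=\binom{N-D+i-2}{i}$ in that range, and $(k-1)$-stackedness together with Dehn--Sommerville gives $g_i(\mw)=0$ for $i>k-1$. The main obstacle is the stackedness half in the odd case: the $(k-1)$-neighborliness of $C(2k-1,\cdot)$ alone leaves the interior faces of the triangulation one dimension too low, and one must exploit the specifically cyclic structure of the vertex figure at a terminal vertex of the moment curve to close this gap.
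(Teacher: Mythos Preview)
The paper does not actually give its own proof here; it simply asserts that the original McMullen--Walkup argument \cite[p.~269]{McMuW71} for the even case $K=2k$ goes through for general $K$. Your proposal is precisely that argument: triangulate $\mw(K,D,N;x)$ by the simplices $T\ast F$ with $F$ a facet of $\anst_x(C)$, identify the interior faces as the joins $T\ast F'$ with $F'\cup\{x\}\notin C$, and bound their dimension via neighborliness. So your approach matches what the paper has in mind.

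Your treatment of the odd case is in fact sharper than the paper's statement. As you observe, for $K=2k-1$ the $(k-1)$-neighborliness of $C(2k-1,\cdot)$ by itself is not enough; one needs $\lk_x(C)$ to be $(k-1)$-neighborly, and this is guaranteed only when $x$ is a terminal vertex of the moment curve (so that $\lk_x(C)\cong\partial C(2k-2,\cdot)$). The lemma as literally stated---for arbitrary $x$---is false for odd $K$: already for $K=3$ and $C=C(3,5)$, taking $x=v_2$ gives a polytope with $g_2=1$, hence not $1$-stacked. The paper only ever applies the lemma with $x$ the last vertex, so the slip is harmless downstream, but your restriction to terminal $x$ is necessary and correct. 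The deduction of the $g$-vector formula from $(k-1)$-neighborliness and $(k-1)$-stackedness is routine, as you say.
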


The vertices of $C$ come with a natural total order $v_1<v_2<\dots <v_{N-D+K}$, inherited from the order $t_1 < t_2 <\dots < t_{N-D+K}$ of the parameters in the definition of $C$. We take $x$ to be the last vertex in that ordering,
denoting the resulting polytope simply by $\mw(K,D,N)$.
Removing $x=v_{N-D+K}$, we extend the order $v_1<\dots<v_{N-D+K-1}$ of the remaining vertices of $C$  to an order $v_1<\dots<v_{N-D+K-1}<v_{N-D+K}<\dots<v_N$ of the vertices of $\mw(K,D,N)$, where $v_{N-D+K},\dots,v_N$ are the vertices of the $(D-K)$-simplex $T$.
We will use the following result:

\begin{lemma}\label{lem:vlink_of_MW}
The vertex figure $\vfig{\mw(2k,D,N)}{v_1}$ is combinatorially isomorphic to $\mw(2k-1,D-1,N-1)$.
\end{lemma}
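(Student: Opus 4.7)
I would compute the link of $v_1$ in the description~\eqref{eq:P} of $\partial \mw(2k,D,N)$, and recognize the result as the analogous description of $\partial \mw(2k-1,D-1,N-1)$ given by the same formula.

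The essential combinatorial input is a classical consequence of Gale's evenness condition: the vertex figure of the cyclic polytope $C(2k,n)$ at its first vertex is combinatorially isomorphic to $C(2k-1,n-1)$, with the isomorphism respecting the moment-curve order. Explicitly, a $2k$-subset $S\subseteq[n]$ containing $1$ is a facet of $C(2k,n)$ iff $\{s-1\,:\,s\in S\setminus\{1\}\}\subseteq[n-1]$ is a facet of $C(2k-1,n-1)$, since both conditions amount to the same parity statement about the gaps in $[n]\setminus S$. Applied to $C=C(2k,n)$ with $n=N-D+2k$, this gives $\vfig{C}{v_1}\cong C(2k-1,n-1)$, and the apex $x=v_n^C$ corresponds to the last vertex of $C(2k-1,n-1)$. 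Since $C$ is $k$-neighborly with $k\ge 2$, hence $2$-neighborly, $\{v_1,x\}$ is an edge of $C$ and $x$ is indeed a vertex of $\vfig{C}{v_1}$.

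Next I apply $\lk_{v_1}$ to~\eqref{eq:P}. Since $v_1\notin\vertices(T)$, $\lk_{v_1}$ commutes with joining by $\langle T\rangle$ or $\partial T$; and the link at $v_1$ commutes with $\lk_x$ and with $\anst_x$ on the cyclic factor: $\lk_{v_1}(\lk_x(C))=\lk_x(\lk_{v_1}(C))$ and $\lk_{v_1}(\anst_x(C))=\anst_x(\lk_{v_1}(C))$. Writing $C'=\vfig{C}{v_1}$ and using $\lk_{v_1}(C)=\partial C'$, we obtain
\[
\lk_{v_1}\!\bigl(\partial \mw(2k,D,N)\bigr)=\langle T\rangle \ast\lk_x(C')\bigcup_{\partial T\ast\lk_x(C')}\partial T\ast\anst_x(C').
\]
By the previous paragraph, $C'\cong C(2k-1,n-1)$ with $x$ at its last vertex; also $T$ has dimension $D-2k=(D-1)-(2k-1)$, and $n-1=(N-1)-(D-1)+(2k-1)$. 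Hence the right-hand side is precisely~\eqref{eq:P} applied to $\mw(2k-1,D-1,N-1)$, giving an isomorphism of boundary complexes and hence of $(D-1)$-polytopes. The main obstacle is the cyclic vertex-figure step: one must track the vertex orderings carefully so that $x$ lands at the last vertex of $C(2k-1,n-1)$, since that is precisely where the simplex in $\mw(2k-1,D-1,N-1)$ is attached; once this is pinned down, the rest is a formal link computation.
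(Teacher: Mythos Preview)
Your proof is correct and follows essentially the same approach as the paper's: both identify $C'=\vfig{C}{v_1}\cong C(2k-1,N-D+2k-1)$ and then verify that $\lk_{v_1}\bigl(\partial\mw(2k,D,N)\bigr)$ coincides with the decomposition~\eqref{eq:P} applied to $C'$ and $T$. The paper carries this out by a direct face-by-face check (faces containing $T$ versus not), whereas you phrase it via commutation of $\lk_{v_1}$ with join, $\lk_x$, and $\anst_x$; the content is the same.
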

\begin{proof}
For $C=C(2k,N-D+2k)$ denote $C'=\vfig{C}{v_1}$, and note that $C'\cong C(2k-1,N-D+2k-1)$. Applying the construction in Definition~\ref{def:MW} to $C'$ and a $(D-2k)$-simplex $T$ produces an $\mw(2k-1,D-1,N-1)$ with boundary complex
\begin{equation}\label{eq:boundery_complex_C'}
\langle T\rangle \ast \lk_x(C')\bigcup_{\partial T\ast \lk_x(C')}\partial T\ast \anst_x(C').
\end{equation}
We now show that the complex above is equal to $\lk_{v_1}(\mw(2k,D,N))$.
Let $F$ be a face of $\mw(2k,D,N)$ containing $v_1$.
If $F$ contains $T$ then $F\setminus\{v_1\}$ is in $\langle T\rangle \ast \lk_x(C')$,
and if $F$ does not contain $T$ then $F\setminus\{v_1\}$ is in $\partial T\ast \anst_x(C')$. Thus $\lk_{v_1}(\mw(2k,D,N))$ is contained in the complex~\eqref{eq:boundery_complex_C'}.
Similarly, for the other direction, take a face $F'$ in the complex~\eqref{eq:boundery_complex_C'} and observe that $F'\cup\{v_1\}$ is in the boundary complex of $\mw(2k,D,N)$, as described in~\eqref{eq:P}.
\end{proof}

\section{The Sanyal--Ziegler construction}\label{sec:SZ}

We give a very brief sketch of the construction, focusing on the combinatorial description of vertex figures. The reader is prompted to confer with the paper~\cite{SanyZ10}, or with Sanyal's diploma thesis~\cite{Sany05}.

Let $(P,<)$ be a simplicial $(d-2)$-polytope with $n-1$ vertices, totally ordered by $<$. Label the vertices $v_1,\dots,v_{n-1}\in\R^{d-2}$ according to the given order $v_1<v_2<\dots <v_{n-1}$, and assume that the vertices are in general position, i.e., no $d-1$ vertices of $P$ lie on a hyperplane. We start by defining the lexicographic diamonds of $P$.

\subsection{Lexicographic diamonds}

Let $w_1,\dots ,w_{n-1}\in\R$ be a set of {\bf heights}, and denote by $V^w=\set{(w_i,v_i)}{1\leq i\leq n-1}\subset\R^{d-1}$ the set of {\bf lifted vertices}. Let $p=(w_0,v'_0)\in\R^{d-1}$ be an arbitrary point with $w_0\gg |w_i|$ for every $1\leq i\leq n-1$, and consider the $(d-1)$-polytope $D(P,w)=\conv(V^w,p)$. We call $D(P,w)$ the {\bf diamond} over $P$ with
heights $w$, noting that, for $w_0$ big enough, the combinatorial type of $D(P,w)$ is independent of the choice of the point $p$.
Projecting the lower envelope of $D(P,w)$ onto $P$ shows that $\anst_p(D(P,w))$ is a polytopal subdivision of $P$.

Of special interest are the subdivisions of $P$ induced by the heights $(w_1,w_2,\dots,w_{n-1})=(\pm h,0,\dots,0)$ with $h>0$.
The subdivision of $P$ induced by $(-h,0,\dots,0)$ is obtained by {\bf pulling} $v_1$; it is a triangulation of $P$, and its facets are the pyramids with apex $v_1$ over facets in $P\cap P_1$ where $P_1=\conv(v_2,\dots,v_{n-1})$.
The subdivision of $P$ induced by $(h,0,\dots,0)$ is obtained by {\bf pushing} $v_1$, and its facets are the pyramids with apex $v_1$ over facets in $P_1\setminus P$, plus one (possibly non-simplex) facet $P_1$.
The {\bf $a$-th lexicographic subdivision} $\lex_a(P)$ of $P$ is the polytopal subdivision of $P$ obtained by successively pushing the vertices $v_1,\dots,v_{a-1}$, and then pulling $v_a$. That is, pushing $v_1$ creates a subdivision with one non-simplex cell $P_1$, which we replace by its subdivision obtained by pushing $v_2$, which, in turn, has only one non-simplex cell $P_2=\conv\{v_3,\dots ,v_{n-1} \}$, and so on, until we finally replace $P_{a-1}=\conv\{v_a,\dots, v_{n-1} \}$ by its triangulation obtained by pulling $v_a$.

The above iterative procedure amounts to choosing a set of heights $w_1,\dots w_{n-1}$ with
\[
w_1>\dots>w_{a-1}>0>w_a,\quad\text{and}\quad w_{a+1}=\dots =w_{n-1}=0.
\]
The resulting diamond (with $w_0\gg w_1,-w_a$), denoted $D_a=D(P,w)$, is called the {\bf $a$-th lexicographic diamond}.
Its vertices are labeled $v_0, v_1, \dots, v_{n-1}$,
with $v_0$ corresponding to the apex $p$; thus $\anst_{v_0}(D_a)=\lex_a(P)$.

\begin{remark}
Note that pushing or pulling a vertex in a simplex has no effect, thus the (possibly) different diamonds
are $D_a$ with $1\leq a\leq n-d+1$.
\end{remark}

\subsection{The vertex figures of $Q$}

Take a Gale transform $G\in \R^{(n-1)\times(n-d)}$ of $P$ that has the form $G=\begin{bmatrix}
I_{n-d}\\
\overline{G}
\end{bmatrix}$, where $\overline{G}\in\R^{(d-1)\times (n-d)}$. Plugging $\overline{G}$ into the {\bf deformed cube template} (see~\cite[Definition 3.1]{SanyZ10}) produces a combinatorial $n$-cube $\overline{C}=C_n(\overline{G})$. The projection $\pi_d(\overline{C})$ of $\overline{C}$ onto the last $d$ coordinates is the cubical polytope $Q=Q(P,<)$ mentioned in the introduction.

The following key result from~\cite{SanyZ10}%
\footnote{Theorem 3.7 in~\cite{SanyZ10} actually contains a typo, having $n-d-1$ instead of the correct value $n-d+1$. Their proof, however, does give the correct value.
Further, \cite[Theorem 3.7]{SanyZ10} is stated for $Q(P,<)$ where $P$ is neighborly; however, their proof holds verbatim for any simplicial polytope $P$.
}
states that each vertex figure of $Q$ is combinatorially equivalent to some diamond $D_a$ and, moreover, specifies which diamond corresponds to a given vertex $v$ of $Q$.
\begin{lemma}[{\cite[Theorem 3.7]{SanyZ10}}]\label{lem:SZ}
Let $v$ be a vertex of $\overline{C}$ labeled by $\sigma\in\{+,-\}^n$. Then the vertex figure of $\pi_d(v)$ in $Q$ is isomorphic to $D_a$ with
$$a=\min\left(\set{i\in [n]}{\sigma_i=+}\bigcup \{n-d+1\}\right).$$
\end{lemma}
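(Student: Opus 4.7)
The plan is to compute the vertex figure of $\pi_d(v)$ in $Q$ by pushing the vertex figure of $v$ in $\overline{C}$ through the projection $\pi_d$, and then to identify the resulting polytope with a lexicographic diamond.

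First, I would verify that $\pi_d(v)$ is a vertex of $Q$ and that the linear map induced by $\pi_d$ on a sufficiently small separating hyperplane at $v$ identifies $\vfig{Q}{\pi_d(v)}$ with $\pi_d\!\left(\vfig{\overline{C}}{v}\right)$. Since $\overline{C}$ is combinatorially an $n$-cube, $\vfig{\overline{C}}{v}$ is the $(n-1)$-simplex $\Delta_v$ whose $n$ vertices $u_1,\dots,u_n$ are in bijection with the $n$ cube edges at $v$, each indexed by the coordinate $i\in[n]$ whose sign $\sigma_i$ is flipped. Hence $\vfig{Q}{\pi_d(v)}$ has at most $n$ vertices, and the lemma reduces to identifying its combinatorial type.

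Second, using the explicit deformed cube template of~\cite[Definition~3.1]{SanyZ10}, I would compute the projected edge directions $\pi_d(u_i-v)\in\R^d$. These split into two groups according to the template's structure: for $i\ge n-d+1$, the direction is a scalar multiple of $\pm e_{i-(n-d)}\in\R^d$ with sign governed by $\sigma_i$, and one of these (the one whose coordinate is made exponentially large by the template's scaling) becomes the apex $p$ of the target diamond, while the remaining $d-1$ form the simplex part $T$; for $i\le n-d$, the direction is built from the $i$-th row of the Gale block $\overline{G}$ and, under Gale duality, identifies with the lift of the vertex $v_i\in P$ to a signed height $w_i$ whose sign and magnitude are dictated by $\sigma_i$ and the template's scaling constants.

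Third, I would identify the resulting lifted configuration with $D_a$ for $a=a(\sigma):=\min\!\left(\set{i\in[n]}{\sigma_i=+}\cup\{n-d+1\}\right)$. Since the combinatorial type of $D(P,w)$ depends only on the sign pattern of $(w_1,\dots,w_{n-1})$ together with the relative order of the positive entries, it suffices to check that the heights from Step~2 realize $w_1>w_2>\dots>w_{a-1}>0>w_a$ with $w_{a+1}=\dots=w_{n-1}=0$ (the latter holding because the template's geometric scaling makes all heights past the first $+$ sign in $\sigma$ negligible compared to $w_a$), and that the residual signs $\sigma_{n-d+1},\dots,\sigma_n$ only relabel the simplex-$T$ part of the diamond without affecting its combinatorial type. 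The main obstacle is Step~2: one must verify that the deformed cube template's carefully chosen scalings produce exactly the lexicographic dominance pattern characteristic of $D_a$, and in particular that ``the first $+$ position in $\sigma$'' is precisely the slot whose edge direction becomes the unique negative-height vertex $v_a$ of the diamond.
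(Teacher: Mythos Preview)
The paper does not contain a proof of this lemma: it is quoted verbatim from Sanyal and Ziegler \cite[Theorem~3.7]{SanyZ10}, with only a footnote correcting a typo and noting that their argument applies to any simplicial $P$. So there is no proof here to compare your proposal against.

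That said, your outline is broadly the approach one expects from \cite{SanyZ10}, but a couple of points deserve care. First, the identification in your Step~1 is not as automatic as you state: in general the vertex figure of a projection is \emph{not} the projection of the vertex figure, and one needs the fact that all $2^n$ vertices of $\overline{C}$ survive the projection (which the template is engineered to guarantee) together with an argument that the faces through $\pi_d(v)$ in $Q$ are exactly the $\pi_d$-images of the faces through $v$ in $\overline{C}$. Second, your indexing in Step~2 is inconsistent with the isomorphism recorded immediately after the lemma in the paper: there, flipping the \emph{first} coordinate of $\sigma$ yields the apex $v_0$ of the diamond, so the apex does not come from the block $i\ge n-d+1$ as you claim, and the simplex~$T$ in your sketch is not the same object as the $(D-K)$-simplex~$T$ appearing in the McMullen--Walkup construction. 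Third, your claim in Step~3 that heights past index $a$ can be taken as ``negligible'' and therefore treated as zero needs justification: the combinatorial type of $D(P,w)$ is not determined solely by the sign pattern and the ordering of positive entries, and one must argue that the specific geometric scaling in the template forces the correct lexicographic subdivision.
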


The isomorphism $D_a\cong \vfig{Q}{v}$ is given by:
$v_i\in D_a$ corresponds to the neighbor of $v$ obtained by flipping the $(i+1)$-th coordinate of $v$ $(0 \leq i \leq n-1)$.

\section{The cubical polytopes $Q(k,d,n)$}\label{sec:Q}

Fix positive integers $k \ge 1$ and $n \ge d \ge 2k+2$.
We apply the Sanyal--Ziegler construction to the McMullen--Walkup polytope $P=\mw(2k,d-2,n-1)$, with a total order $<$ on its vertices as described after Lemma~\ref{lem:gMW} above.
The result is a $d$-dimensional cubical polytope $Q=Q(k,d,n)=Q(P,<)$ with $2^n$ vertices.
We now compute its cubical $g$-vector $\gc(Q)$, in stages.

\subsection{Computing $\gsc(Q(k,d,n))$}

By Hetyei's observation~\eqref{eq:Hetyei} we have
\begin{equation*}
\hsc_i(Q)=\sum_{v\in \vertices(Q)}h_i(\lk_v(Q))
\qquad (0 \leq i \leq d-1).
\end{equation*}
Therefore, by Lemma~\ref{lem:SZ}, for each $1\leq i\leq \lfloor\frac{d-1}{2}\rfloor$:
\begin{equation}\label{eq:Q_g-vector}
\gsc_i(Q)
= \sum_{v\in \vertices(Q)} g_i(\lk_v(Q))
= \sum_{a=1}^{n-d} 2^{n-a} g_i(D_a) + 2^{d} g_i(D_{n-d+1}).
\end{equation}

We now compute the $g$-vectors of the diamonds $D_a$ at hand, namely for our choice of $(P,<)$. First we compute $g(D_1)$, a computation that is then used to compute $g(D_a)$ for general $a$.

\begin{proposition}\label{prop:D_1_g-vector}
For $a=1$ and $0 \leq i\leq \lfloor\frac{d-1}{2}\rfloor$:
$$g_i(D_1)=\begin{cases}
0,& \text{if } i> k+1;\\
\mchoose{n-d}{k},& \text{if } i=k+1;\\
\mchoose{n-d}{i},& \text{if } i\leq k.
\end{cases}$$
\end{proposition}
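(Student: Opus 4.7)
The plan is to establish a clean $h$-polynomial identity relating $\partial D_1$, $\partial P$ and $\lk_{v_1}(P)$, and then substitute the McMullen--Walkup formulas of Lemmas~\ref{lem:gMW} and~\ref{lem:vlink_of_MW}. First, I would describe $\partial D_1$ combinatorially. Since $D_1$ uses the heights $w_1=-h<0$, $w_2=\dots=w_{n-1}=0$ and an apex $v_0$ very far above, its boundary splits as
\[\partial D_1 \;=\; \lex_1(P) \,\cup\, \bigl(v_0 \ast \partial P\bigr),\]
the two subcomplexes meeting precisely along $\partial P$: the lower envelope of $D_1$ projects onto the pulling triangulation $\lex_1(P)$ of $P$, while the upper envelope is the cone from $v_0$ over the horizon $\partial P$. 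Because $P$ is simplicial, the pulling triangulation further equals the join $\lex_1(P) = v_1 \ast \anst_{v_1}(\partial P)$.

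Second, I would convert this geometric decomposition into an identity of $h$-polynomials. Combining inclusion--exclusion on the above union, the cone identity $h(v \ast K, t) = h(K, t)$ (applied to both $v_0 \ast \partial P$ and $v_1 \ast \anst_{v_1}(\partial P)$), and the star/antistar decomposition $f(\partial P, t) = f(\anst_{v_1}(\partial P), t) + t \cdot f(\lk_{v_1}(P), t)$, one arrives at
\[h(\partial D_1, t) \;=\; (1+t)\, h(\partial P, t) \;-\; t \cdot h(\lk_{v_1}(P), t).\]
Taking successive differences of coefficients then yields the $g$-vector relation
\[g_i(\partial D_1) \;=\; g_i(\partial P) \,+\, g_{i-1}(\partial P) \,-\, g_{i-1}(\lk_{v_1}(P))\]
for all $1 \le i \le \lfloor(d-1)/2\rfloor$.

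Finally, I would substitute the known values. By Lemma~\ref{lem:gMW} applied to $P = \mw(2k,d-2,n-1)$, one has $g_i(\partial P) = \mchoose{n-d}{i}$ for $i \le k$ and vanishing otherwise; by Lemma~\ref{lem:vlink_of_MW} followed by a second application of Lemma~\ref{lem:gMW}, one has $g_i(\lk_{v_1}(P)) = \mchoose{n-d}{i}$ for $i \le k-1$ and vanishing for $i \ge k$ (the degenerate case $k=1$ is settled directly: a vertex link in the $1$-stacked polytope $\mw(2,d-2,n-1)$ is a simplex boundary, with $g$-vector $(1,0,\dots,0)$). Plugging in, the three terms of the $g$-identity telescope into exactly the three cases $i\le k$, $i=k+1$, $i>k+1$ claimed in the proposition. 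The main obstacle is a careful range check: one must verify that the vanishing claims $g_i(\partial P)=0$ for $i>k$ and $g_i(\lk_{v_1}(P))=0$ for $i\ge k$ persist throughout the entire range $i\le\lfloor(d-1)/2\rfloor$ without being disturbed by the Dehn--Sommerville reflection in the right half of the palindrome. This is guaranteed by the hypothesis $d \ge 2k+2$, which makes the flat middle of the $h$-vectors of $\partial P$ and $\lk_{v_1}(P)$ wide enough to cover the required range.
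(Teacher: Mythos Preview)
Your proposal is correct and follows essentially the same approach as the paper: both derive the key identity $h(\partial D_1,t)=(1+t)\,h(\partial P,t)-t\cdot h(\lk_{v_1}(P),t)$ from the decomposition $\partial D_1=\lex_1(P)\cup(v_0\ast\partial P)$ with $\lex_1(P)=v_1\ast\anst_{v_1}(\partial P)$, then substitute the McMullen--Walkup values via Lemmas~\ref{lem:gMW} and~\ref{lem:vlink_of_MW}. Your explicit handling of the degenerate $k=1$ case and of the Dehn--Sommerville range check are welcome points of care that the paper leaves implicit.
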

\begin{proof}
Let $B_1=\lex_1(P)$, namely the simplicial $(d-2)$-ball triangulating $P$ by starring from $v_1$. Then
$$f(D_1,t)=f(B_1,t)+t\cdot f(P,t).$$
Note that, strictly speaking, the simplicial complexes actually considered here are $\partial D_1$, $B_1$ and $\partial P$, of dimensions $d-2$, $d-2$ and $d-3$, respectively.
The corresponding $h$-polynomial is
\[
\begin{aligned}
h(D_1,t)
&= (1-t)^{d-1} \cdot f\left(D_1,\frac{t}{1-t}\right) \\
&= (1-t)^{d-1} \cdot \left[ f\left(B_1,\frac{t}{1-t}\right) + \frac{t}{1-t} \cdot f\left(P,\frac{t}{1-t}\right)\right] \\
&= h(B_1,t)+t\cdot h(P,t).
\end{aligned}
\]
To compute $h(B_1,t)$, observe that
\[
f(B_1,t)=(1+t)\cdot f(\anst_{v_1}(P))=(1+t)\cdot \Big( f(P,t)-t\cdot f(\lk_{v_1}(P),t)\Big).
\]
The $h$-polynomial is therefore
\[
\begin{aligned}
h(B_1,t)
&= (1-t)^{d-1} \cdot f\left(B_1,\frac{t}{1-t}\right) \\
&= (1-t)^{d-1} \cdot \left(1+\frac{t}{1-t}\right)\cdot \left[ f\left(P,\frac{t}{1-t}\right)-\frac{t}{1-t}\cdot f\left(\lk_{v_1}(P),\frac{t}{1-t}\right)\right]\\
&= (1-t)^{d-2}\cdot f\left(P,\frac{t}{1-t}\right)-t\cdot (1-t)^{d-3}\cdot f\left(\lk_{v_1}(P),\frac{t}{1-t}\right)\\
&= h(P,t)-t\cdot h(\lk_{v_1}(P),t).
\end{aligned}
\]
Summing up, we have
\begin{equation*}
h(D_1,t)=(1+t)\cdot h(P,t)-t\cdot h(\lk_{v_1}(P),t)
\end{equation*}
which yields, for $1\leq i\leq \lfloor\frac{d-1}{2}\rfloor$,
\begin{equation}\label{eq:g_of_D_1}
g_i(D_1)=g_i(P)+g_{i-1}(P)-g_{i-1}(\lk_{v_1}(P)).
\end{equation}
Using~\eqref{eq:g_of_P}, Lemma~\ref{lem:vlink_of_MW} and Lemma~\ref{lem:gMW}, we compute the right-hand side of~\eqref{eq:g_of_D_1} to obtain the result.
\end{proof}

\begin{proposition}\label{prop:D_a_g-vector}
For each $2\leq a\leq n-d+1$ and $0 \leq i\leq \lfloor\frac{d-1}{2}\rfloor$:
\[
g_i(D_a)=\begin{cases}
0,& \text{if } i>k+1;\\
\mchoose{n-d-a+1}{k},& \text{if } i=k+1;\\
\mchoose{n-d}{i},& \text{if } i\leq k.
\end{cases}
\]
\end{proposition}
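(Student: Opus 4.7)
The plan is to mimic the $D_1$ case proved in Proposition~\ref{prop:D_1_g-vector}. Decomposing $\partial D_a$ into the upper envelope $v_0 * \partial P$ and the lower envelope $L_a := \lex_a(P)$, meeting along $\partial P$, yields
\[
h(D_a, t) = h(L_a, t) + t \cdot h(P, t),
\]
so the main task is to compute $h(L_a, t)$. Write $P_j := \conv(v_{j+1}, \ldots, v_{n-1})$, and let $V_j$ denote the $(d-3)$-ball consisting of the facets of $P_j$ that are visible from $v_j$ (the ``new'' facets appearing when pushing $v_j$ inside $P_{j-1}$). The structural observation that drives the computation is that in $L_a$,
\[
\lk_{v_1}(L_a) = V_1 \qquad \text{and} \qquad \anst_{v_1}(L_a) = \lex_{a-1}(P_1),
\]
because the facets of $L_a$ containing $v_1$ are exactly the pyramids $v_1 * \tau$ with $\tau$ a facet of $V_1$, while the remaining facets arise from the iterated construction (pushing $v_2, \ldots, v_{a-1}$ and pulling $v_a$) inside $P_1$. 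Converting $f$- to $h$-polynomials, minding the dimension shift between $V_1$ (a $(d-3)$-ball) and $L_a$ (a $(d-2)$-ball), gives the recursion
\[
h(L_a, t) = t \cdot h(V_1, t) + h(\lex_{a-1}(P_1), t).
\]

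Iterating this $a-1$ times, and using the $D_1$-style identity $h(\lex_1(P_{a-1}), t) = h(\partial P_{a-1}, t) - t \cdot h(\lk_{v_a}(P_{a-1}), t)$ at the base, produces
\[
h(L_a, t) = \sum_{j=1}^{a-1} t \cdot h(V_j, t) + h(\partial P_{a-1}, t) - t \cdot h(\lk_{v_a}(P_{a-1}), t).
\]
To eliminate $h(V_j, t)$, observe that $\partial P_j$ is a $(d-3)$-sphere assembled from the two balls $V_j$ and $\anst_{v_j}(\partial P_{j-1})$ glued along their common boundary $\lk_{v_j}(P_{j-1})$; at the $h$-polynomial level this inclusion-exclusion gives $h(V_j, t) = h(\partial P_j, t) - h(\partial P_{j-1}, t) + h(\lk_{v_j}(P_{j-1}), t)$. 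Substituting into the previous display, the sum $\sum_j [h(\partial P_j, t) - h(\partial P_{j-1}, t)]$ telescopes to $h(\partial P_{a-1}, t) - h(P, t)$, and combining with the $t \cdot h(P, t)$ from the first display I obtain the closed form
\[
h(D_a, t) = (1+t) \, h(\partial P_{a-1}, t) + t \sum_{j=1}^{a-1} h(\lk_{v_j}(P_{j-1}), t) - t \cdot h(\lk_{v_a}(P_{a-1}), t),
\]
which reproduces Proposition~\ref{prop:D_1_g-vector} when $a = 1$.

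The final step identifies the polytopes combinatorially and plugs in known $g$-vectors. Since removing any initial segment of vertices of a cyclic polytope produces a smaller cyclic polytope, and since $a \leq n-d+1 \leq n-d+2k+1$ means we never delete a vertex of the simplex $T$, we have $P_{j-1} \cong \mw(2k, d-2, n-j)$ with $v_j$ as its first vertex. Lemma~\ref{lem:vlink_of_MW} then identifies $\lk_{v_j}(P_{j-1}) \cong \partial \mw(2k-1, d-3, n-j-1)$. Feeding the $g$-vectors from~\eqref{eq:g_of_P} and Lemma~\ref{lem:gMW} into the identity $g_i(D_a) = g_i(P_{a-1}) + g_{i-1}(P_{a-1}) + \sum_{j=1}^{a-1} g_{i-1}(\lk_{v_j}(P_{j-1})) - g_{i-1}(\lk_{v_a}(P_{a-1}))$, for $i \leq k$ the two terms $g_{i-1}(P_{a-1})$ and $g_{i-1}(\lk_{v_a}(P_{a-1}))$ cancel, leaving $g_i(D_a) = \binom{n-d-a+i}{i} + \sum_{j=1}^{a-1} \binom{n-d-j+i-1}{i-1}$, which a hockey-stick identity collapses to $\binom{n-d+i-1}{i} = \mchoose{n-d}{i}$; in the top case $i = k+1$ only $g_k(P_{a-1}) = \mchoose{n-d-a+1}{k}$ survives. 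The main obstacle is setting up the recursion for $h(L_a, t)$, in particular verifying the clean identity $\anst_{v_1}(L_a) = \lex_{a-1}(P_1)$; once that is in place, the subsequent telescoping and the hockey-stick simplification are routine.
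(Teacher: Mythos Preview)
Your argument is correct and reaches the same closed form as the paper, but via a different mechanism. The paper contracts the edge $v_1v_0$ in $\partial D_a$: after checking the link condition, the contraction yields the $(a-1)$-st lexicographic diamond over $P_1=\mw(2k,d-2,n-2)$, and the standard $h$-polynomial identity for edge contraction (quoted from~\cite{nevo_VKobs}) gives
\[
h(D_a,t)=h(Y_{a-1}(n-1),t)+t\cdot h(\mw(2k-1,d-3,n-2),t),
\]
which is iterated down to $Y_1(n-a+1)$ and then finished with Proposition~\ref{prop:D_1_g-vector}. You instead stay inside the ball $L_a=\lex_a(P)$, peel off $v_1$ to obtain $\anst_{v_1}(L_a)=\lex_{a-1}(P_1)$ and $\lk_{v_1}(L_a)=V_1$, and eliminate the auxiliary balls $V_j$ by the inclusion--exclusion $h(V_j,t)=h(\partial P_j,t)-h(\partial P_{j-1},t)+h(\lk_{v_j}(P_{j-1}),t)$, which telescopes. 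The two recursions are in fact equivalent (unwinding one step of each and using your formula for $h(V_1,t)$ shows they agree), so the final expression
\[
g_i(D_a)=g_i(P_{a-1})+g_{i-1}(P_{a-1})+\sum_{j=1}^{a-1}g_{i-1}(\lk_{v_j}(P_{j-1}))-g_{i-1}(\lk_{v_a}(P_{a-1}))
\]
is the same. What your route buys is that it is entirely self-contained---only elementary $f$-to-$h$ conversions and the $D_1$ computation are used, with no appeal to the link condition or the edge-contraction identity. What the paper's route buys is a cleaner conceptual picture: each contraction step replaces $D_a$ over $P$ by $D_{a-1}$ over $P_1$, so the induction is on the diamond itself rather than on the subdivision.
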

\begin{proof}
Let $v_0$ be the apex of the diamond $D_a$.
The link condition $\lk_{v_1v_0}(D_a)=\lk_{v_1}(D_a)\cap\lk_{v_0}(D_a)$ holds:
indeed, $\lk_{v_0}(D_a)=\partial P$ and the faces of $\lk_{v_1}(D_a)$ not containing $v_0$ form $\lk_{v_1}(\lex_a (P))$, so
\[
\lk_{v_1}(D_a)\cap\lk_{v_0}(D_a)=
\lk_{v_1}(\lex_a (P))\cap \partial P,
\]
and as $a\ge 2$, the right hand side equals
$\lk_{v_1}(\partial P)$, for which we have $\lk_{v_1}(\partial P)=
\lk_{v_1}(\lk_{v_0}(D_a))=
\lk_{v_1v_0}(D_a)$.

Denote by $K$ the $(d-2)$-complex obtained from $\partial D_a$ by contracting the edge $v_1v_0$.
Then, the $h$-polynomials are related by
\[
h(D_a,t)=h(K,t)+t\cdot h(\vfig{P}{v_1},t),
\]
see e.g.~\cite[Eq.~(10)]{nevo_VKobs}.

Note that $K$ is the boundary complex of a polytope, namely of the $(a-1)$-lexicographic diamond over $P_1=\mw(2k,d-2,n-2)$; denote this diamond by $Y_{a-1}(n-1)$.
Then, by Lemma~\ref{lem:vlink_of_MW}
\begin{equation*}
h(D_a,t)=h(Y_{a-1}(n-1),t)+t\cdot h(\mw(2k-1,d-3,n-2),t).
\end{equation*}
We may again contract the edge between the first vertex of $P_1$ and the apex $v_0$ in $Y_{a-1}(n-1)$ to obtain $Y_{a-2}(n-2)$. We do these edge contractions $a-1$ times, and get
\begin{equation*}
h(D_a,t)=h(Y_1(n-a+1),t)+t\cdot \sum_{j=1}^{a-1}h(\mw(2k-1,d-3,n-1-j),t),
\end{equation*}
and therefore
\begin{equation}
g_i(D_a)=g_i(Y_1(n-a+1))+\sum_{j=1}^{a-1}g_{i-1}(\mw(2k-1,d-3,n-1-j)).
\end{equation}
As $Y_1(n-a+1)$ is the $1$-lexicographic diamond over $\mw(2k,d-2,n-a)$,
the claimed result follows from Proposition~\ref{prop:D_1_g-vector}, Lemma~\ref{lem:gMW}, and the identity
\begin{multline*}
\mchoose{n-d-a+1}{i}+\sum\limits_{j=1}^{a-1}\mchoose{n-d-j+1}{i-1} \\
=\mchoose{n-d-a+1}{i}+\sum\limits_{j=1}^{a-1}\left[\mchoose{n-d-j+1}{i}-\mchoose{n-d-j}{i}\right] = \mchoose{n-d}{i}.\qedhere
\end{multline*}
\end{proof}

Combining~\eqref{eq:Q_g-vector} with Propositions~\ref{prop:D_1_g-vector} and~\ref{prop:D_a_g-vector},
and noting that $\mchoose{0}{k} = 0$ for $k \ge 1$,
we conclude
\begin{corollary}\label{t:gsc_Q}
For each $0 \leq i \leq \lfloor \frac{d-1}{2} \rfloor$,
\[
\gsc_i(Q)=\begin{cases}
0,& \text{if } i>k+1;\\
{\displaystyle \sum_{a=1}^{n-d}} 2^{n-a} \mchoose{n-d-a+1}{k}
,& \text{if } i=k+1;\\
2^n \mchoose{n-d}{i},& \text{if } i\leq k.
\end{cases}
\]
\end{corollary}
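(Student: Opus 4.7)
The plan is to evaluate the right-hand side of equation~\eqref{eq:Q_g-vector} term-by-term, substituting the formulas for $g_i(D_a)$ provided by Propositions~\ref{prop:D_1_g-vector} and~\ref{prop:D_a_g-vector}. A convenient observation at the outset is that Proposition~\ref{prop:D_a_g-vector}, read at $a=1$, returns $\mchoose{n-d}{k}=\mchoose{n-d}{k}$ for $i=k+1$ and $\mchoose{n-d}{i}$ for $i\le k$, matching Proposition~\ref{prop:D_1_g-vector}. Hence we may treat all $a\in\{1,\dots,n-d+1\}$ uniformly, with the ``boundary'' term $D_{n-d+1}$ (weighted by $2^d$) fitting in by the same formula.

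First I would dispose of the case $i>k+1$: both propositions give $g_i(D_a)=0$ for every $a$, so $\gsc_i(Q)=0$ is immediate from~\eqref{eq:Q_g-vector}. Next, for $i\le k$, the value $g_i(D_a)=\mchoose{n-d}{i}$ is independent of $a$, so it factors out of the sum, leaving
\[
\gsc_i(Q)=\mchoose{n-d}{i}\cdot\left(\sum_{a=1}^{n-d}2^{n-a}+2^d\right).
\]
The parenthesized quantity telescopes as a geometric sum: $\sum_{a=1}^{n-d}2^{n-a}=2^n-2^d$, so adding the trailing $2^d$ yields $2^n$, delivering the claimed formula $\gsc_i(Q)=2^n\mchoose{n-d}{i}$.

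Finally, for $i=k+1$, Proposition~\ref{prop:D_a_g-vector} gives $g_{k+1}(D_a)=\mchoose{n-d-a+1}{k}$, so
\[
\gsc_{k+1}(Q)=\sum_{a=1}^{n-d}2^{n-a}\mchoose{n-d-a+1}{k}+2^d\mchoose{0}{k}.
\]
Since $k\ge 1$, we have $\mchoose{0}{k}=\binom{k-1}{k}=0$, and the last term drops out, giving the stated expression.

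There is no real obstacle here; the proof is a short bookkeeping exercise, and the only nontrivial simplifications are the geometric identity $\sum_{a=1}^{n-d}2^{n-a}+2^d=2^n$ in the case $i\le k$ and the vanishing of $\mchoose{0}{k}$ for $k\ge 1$ in the case $i=k+1$.
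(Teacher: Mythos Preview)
Your proof is correct and follows the same approach as the paper, which simply says to combine~\eqref{eq:Q_g-vector} with Propositions~\ref{prop:D_1_g-vector} and~\ref{prop:D_a_g-vector} and note that $\mchoose{0}{k}=0$ for $k\ge 1$; you have merely spelled out the bookkeeping (the geometric sum and the vanishing of the $a=n-d+1$ term) in more detail.
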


\subsection{Computing $\gc(Q(k,d,n))$}

In order to compute the cubical $g$-vector of $Q$, and in particular $\gc_{k+2}(Q)$, we need the following binomial identity:
\begin{lemma}\label{t:binomial}
For any integers $k \ge 1$ and $m \ge 0$,
\[
\sum_{a=1}^{m} 2^{m-a} \mchoose{m-a+1}{k}
= (-1)^{k+1} + 2^m \sum_{j=0}^{k} (-1)^j \mchoose{m}{k-j}.
\]
\end{lemma}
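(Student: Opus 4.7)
The plan is to reindex the sum and then set up a clean recursion in $k$. Using $\mchoose{m-a+1}{k} = \binom{m-a+k}{k}$ and substituting $b = m-a$, the left-hand side becomes
\[
T_k(m) := \sum_{b=0}^{m-1} 2^b \binom{b+k}{k}.
\]
The base case $k=0$ is immediate: $T_0(m) = \sum_{b=0}^{m-1} 2^b = 2^m - 1$, which matches the right-hand side at $k=0$ since $\mchoose{m}{0} = 1$. (The degenerate case $m=0$, where both sides vanish, is handled by direct inspection.)

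The heart of the argument is the recursion
\[
T_k(m) = 2^m \mchoose{m}{k} - T_{k-1}(m), \qquad k \ge 1.
\]
To derive it, apply Pascal's identity $\binom{b+k}{k} = \binom{b+k-1}{k-1} + \binom{b+k-1}{k}$ inside $T_k(m)$. The first piece contributes exactly $T_{k-1}(m)$ by definition. In the second piece the $b=0$ term vanishes (since $\binom{k-1}{k}=0$ for $k \ge 1$), and reindexing by $b'=b-1$ rewrites the remainder as $2T_k(m) - 2^m \binom{m+k-1}{k}$, where the subtracted boundary term corrects for the missing $b'=m-1$ summand. Rearranging $T_k(m) = T_{k-1}(m) + 2T_k(m) - 2^m \mchoose{m}{k}$ then gives the recursion.

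Iterating the recursion down to $T_0$ yields
\[
T_k(m) = 2^m \sum_{j=0}^{k-1} (-1)^j \mchoose{m}{k-j} + (-1)^k T_0(m),
\]
and substituting $T_0(m) = 2^m - 1$ and using $\mchoose{m}{0} = 1$ to absorb $(-1)^k 2^m$ as the missing $j=k$ term produces
\[
T_k(m) = 2^m \sum_{j=0}^{k} (-1)^j \mchoose{m}{k-j} + (-1)^{k+1},
\]
which is exactly the claimed right-hand side. The only real obstacle is the bookkeeping in the Pascal-plus-shift step, in particular tracking the two boundary terms ($b=0$ dropped, $b'=m-1$ missing); once the recursion is in hand, the rest is mechanical.
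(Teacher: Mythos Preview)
Your proof is correct. Both your argument and the paper's are elementary inductions, but they run along different axes: the paper fixes $k$ and inducts on $m$, using the one-line increment $L_{m+1}-L_m = 2^m\mchoose{m+1}{k}$ and checking that the right-hand side satisfies the same recurrence via a telescoping sum; you instead fix $m$ and set up a recursion in $k$, namely $T_k(m) = 2^m\mchoose{m}{k} - T_{k-1}(m)$, obtained from Pascal's identity plus an index shift, and then unroll to the base $T_0(m)=2^m-1$. The paper's route is marginally shorter because the $m$-increment is obvious from the definition and only the right-hand side needs work, whereas your Pascal-plus-shift step requires tracking two boundary terms. On the other hand, your recursion makes transparent \emph{why} the alternating sum $\sum_j (-1)^j \mchoose{m}{k-j}$ appears on the right: it is exactly what one gets by unrolling a sign-alternating first-order recurrence. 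Either way the identity is a routine calculation.
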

\begin{proof}
Fix $k \ge 1$, and denote
\[
\begin{aligned}
L_m &:= \sum_{a=1}^{m} 2^{m-a} \mchoose{m-a+1}{k}
= \sum_{b=0}^{m-1} 2^{b} \mchoose{b+1}{k}, \\
R_m &:= (-1)^{k+1} + 2^m \sum_{j=0}^{k} (-1)^j \mchoose{m}{k-j}.
\end{aligned}
\]
We will prove that $L_m = R_m$ for all $m \ge 0$, by induction on $m$.

For $m=0$, indeed $L_0 = R_0 = 0$.
For the induction step, note that
$L_{m+1} = L_m + 2^m \mchoose{m+1}{k}$.
Thus it suffices to prove that
$R_{m+1} = R_m + 2^m \mchoose{m+1}{k}$ as well. Indeed,
\begin{equation*}
\begin{aligned}
R_{m+1} - R_m
&= 2^m \sum_{j=0}^{k} (-1)^j \left[ 2\mchoose{m+1}{k-j} - \mchoose{m}{k-j} \right] \\
&= 2^m \sum_{j=0}^{k} (-1)^j \left[ \mchoose{m+1}{k-j} + \mchoose{m+1}{k-j-1} \right] \\
&= 2^m \mchoose{m+1}{k},
\end{aligned}
\end{equation*}
as claimed.
\end{proof}

\begin{theorem}\label{t:gc_Q}
For each $1 \leq i \leq \dhalf{d}$,
\[
\gc_i(Q)=\begin{cases}
0,& \text{if } i>k+1;\\
2^n {\displaystyle\sum_{j=1}^{i}} (-1)^{j-1} \mchoose{n-d}{i-j} + (-1)^i 2^d,& \text{if } i\leq k+1.
\end{cases}
\]
\end{theorem}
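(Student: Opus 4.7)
The plan is to pass from the short cubical $g$-vector, which was already computed in Corollary~\ref{t:gsc_Q}, to the long cubical $g$-vector via the definitional relation $\hsc_i = \hc_i + \hc_{i+1}$. Telescoping this identity yields $\gsc_i = \gc_i + \gc_{i+1}$ for $i \ge 1$, while the case $i=0$ together with $\gc_0 = 2^{d-1}$ gives the boundary condition $\gsc_0 = \hsc_0 = 2\hc_0 + \gc_1 = 2^d + \gc_1$. Thus for $i \ge 1$ one has the recurrence $\gc_{i+1} = \gsc_i - \gc_i$ with initial value $\gc_1 = \gsc_0 - 2^d$, whose solution is
\begin{equation*}
\gc_i = \sum_{j=0}^{i-1} (-1)^{i-1-j} \gsc_j + (-1)^i 2^d.
\end{equation*}

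For $1 \le i \le k+1$, every index $j$ that appears in this sum satisfies $j \le k$, so Corollary~\ref{t:gsc_Q} supplies $\gsc_j = 2^n \mchoose{n-d}{j}$. Substituting these values and re-indexing by $\ell = i-j$ reproduces the closed form stated in the theorem for this range.

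For $i = k+2$, I would apply the recurrence once more, writing $\gc_{k+2} = \gsc_{k+1} - \gc_{k+1}$, and use Corollary~\ref{t:gsc_Q} to express $\gsc_{k+1} = \sum_{a=1}^{n-d} 2^{n-a} \mchoose{n-d-a+1}{k}$. Factoring out $2^d$ and applying Lemma~\ref{t:binomial} with $m = n-d$ rewrites $\gsc_{k+1}$ in the closed form $(-1)^{k+1} 2^d + 2^n \sum_{j=0}^{k} (-1)^j \mchoose{n-d}{k-j}$, which upon re-indexing is precisely the expression already obtained for $\gc_{k+1}$. Hence $\gc_{k+2} = \gsc_{k+1} - \gc_{k+1} = 0$. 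For $i > k+2$, a straightforward induction using the recurrence $\gc_i = \gsc_{i-1} - \gc_{i-1}$ together with the vanishing $\gsc_{i-1} = 0$ from Corollary~\ref{t:gsc_Q} yields $\gc_i = 0$.

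The main obstacle is the cancellation at $i = k+2$: without the binomial identity of Lemma~\ref{t:binomial} there is no transparent way to see that the intricate sum defining $\gsc_{k+1}$ coincides with the alternating-sum formula for $\gc_{k+1}$. Once that identity is in hand, the rest of the argument consists of routine substitutions, re-indexings, and a one-line induction for the vanishing range.
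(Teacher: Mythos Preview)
Your argument is correct and essentially coincides with the paper's own proof. Both derive the same inversion formula $\gc_i=\sum_{j=1}^{i}(-1)^{j-1}\gsc_{i-j}+(-1)^i 2^d$ (your sum $\sum_{j=0}^{i-1}(-1)^{i-1-j}\gsc_j$ is the same thing re-indexed), substitute Corollary~\ref{t:gsc_Q} for $i\le k+1$, and invoke Lemma~\ref{t:binomial} with $m=n-d$ to obtain $\gc_{k+2}=0$; the only cosmetic difference is that you verify $\gsc_{k+1}=\gc_{k+1}$ directly via the one-step recurrence, whereas the paper expands $\gc_{k+2}$ from the full inversion formula before applying the lemma, and then deduces $\gc_i=0$ for $i>k+2$ from $\gc_i+\gc_{i+1}=\gsc_i=0$ rather than by explicit induction.
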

\begin{proof}
From
\[
\begin{aligned}
\gc_0(Q) &= 2^{d-1}, \qquad
\gsc_0(Q) = 2\gc_0(Q) + \gc_1(Q), \\
\gsc_i(Q) &= \gc_i(Q) + \gc_{i+1}(Q)
\qquad (1 \leq i \leq \dhalf{(d-1)})
\end{aligned}
\]
it follows that
\begin{equation}\label{eq:gc_Q}
\gc_i(Q) = \sum_{j=1}^{i} (-1)^{j-1} \gsc_{i-j}(Q) + (-1)^i 2^d
\qquad (1 \leq i \leq \dhalf{d}).
\end{equation}
The values of $\gc_i(Q)$ for $i \le k+1$ now follow easily from Corollary~\ref{t:gsc_Q}.
It also follows that
\[
\gc_i(Q) + \gc_{i+1}(Q) = 0 \qquad (i \ge k+2),
\]
and all that remains is to show that $\gc_{k+2}(Q) = 0$.
Indeed, by \eqref{eq:gc_Q} and Corollary~\ref{t:gsc_Q},
\[
\begin{aligned}
\gc_{k+2}(Q)
&= \sum_{j=1}^{k+2} (-1)^{j-1} \gsc_{k+2-j}(Q) + (-1)^{k+2} 2^d \\
&= \sum_{a=1}^{n-d} 2^{n-a} \mchoose{n-d-a+1}{k}
+ 2^n \sum_{j=2}^{k+2} (-1)^{j-1} \mchoose{n-d}{k+2-j} + (-1)^{k} 2^d \\
&= \sum_{a=1}^{n-d} 2^{n-a} \mchoose{n-d-a+1}{k}
- \left[ 2^n \sum_{j=0}^{k} (-1)^{j} \mchoose{n-d}{k-j} + (-1)^{k+1} 2^d \right].
\end{aligned}
\]
Using Lemma~\ref{t:binomial} with $m = n-d$ gives, indeed,
$\gc_{k+2}(Q) = 0$ as claimed.
\end{proof}

\begin{corollary}\label{t:asymptotics}
Fix $k \ge 1$ and $d \ge 2k+2$, and let $\{Q_n\}_{n=d}^{\infty}=\{Q(k,d,n)\}_{n=d}^{\infty}$.
Then
\[
\lim_{n \to \infty} \frac{\gc_{k+1}(Q_n)}{2^{n} \mchoose{n-d}{k}} = 1,
\qquad\text{and}\quad
\lim_{n \to \infty} \frac{\gc_i(Q_n)}{2^{n} \mchoose{n-d}{k}} = 0
\quad (\forall i \ne k).
\]
\end{corollary}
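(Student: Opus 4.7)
The plan is to read the asymptotics directly off the closed form for $\gc_i(Q_n)$ given by Theorem~\ref{t:gc_Q}, by comparing polynomial degrees in $n$ while $d$ and $k$ are held fixed. The key elementary fact I will use is that $\mchoose{n-d}{r}=\binom{n-d+r-1}{r}$ is a polynomial in $n$ of degree $r$ with leading coefficient $1/r!$; in particular the common denominator $2^n\mchoose{n-d}{k}$ grows like $2^n n^k/k!$, so it suffices to identify the leading behaviour of each numerator $\gc_i(Q_n)$ to that precision.

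For $i\ge k+2$ Theorem~\ref{t:gc_Q} gives $\gc_i(Q_n)=0$, so the ratio vanishes trivially. For $1\le i\le k+1$ I examine
\[
S_i(n):=\sum_{j=1}^{i}(-1)^{j-1}\mchoose{n-d}{i-j},
\]
an alternating sum of polynomials in $n$ of strictly decreasing degrees $i-1,i-2,\ldots,0$. Since these degrees are all distinct, no cancellation can occur in the top degree, and
\[
S_i(n)=\mchoose{n-d}{i-1}+O\!\left(n^{i-2}\right).
\]
Inserting this into Theorem~\ref{t:gc_Q}, the case $i=k+1$ yields $\gc_{k+1}(Q_n)=2^n\mchoose{n-d}{k}+O(2^n n^{k-1})+O(1)$, so dividing by $2^n\mchoose{n-d}{k}$ gives the limit~$1$. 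For $1\le i\le k$ the same expansion bounds the numerator by $2^n\cdot O(n^{i-1})+O(1)$ with $i-1<k$, hence the ratio against $2^n\mchoose{n-d}{k}\sim 2^n n^k/k!$ tends to $0$; the additive constant $(-1)^i 2^d$ is dwarfed by the $2^n$ factor in the denominator.

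All the combinatorial effort has already been invested in Theorem~\ref{t:gc_Q}, so the corollary is essentially a routine polynomial-degree comparison and poses no serious obstacle. The only point requiring attention is confirming that the leading term of $S_i(n)$ does not cancel, which is immediate from the fact that $\deg_n\mchoose{n-d}{i-j}=i-j$ is strictly decreasing in $j$.
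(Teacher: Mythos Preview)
Your argument is correct and matches the paper's treatment: the paper gives no explicit proof of Corollary~\ref{t:asymptotics}, regarding it as an immediate consequence of the closed form in Theorem~\ref{t:gc_Q}, which is exactly the degree comparison you carry out. (Note that the condition ``$\forall i\ne k$'' in the statement is a typo for ``$\forall i\ne k+1$''; your argument establishes the intended claim.)
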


Corollary~\ref{t:asymptotics} shows that the ray spanned by $e_k$
$(2 \leq k \leq \lfloor d/2 \rfloor)$ in $\R^{\lfloor d/2 \rfloor}$ belongs to the closed cone $\mathcal{C}_d$.
Note that this was already known for the ray spanned by $e_{\lfloor d/2 \rfloor}$ because of the existence of neighborly cubical $d$-polytopes, such as $Q(k,2k+2,n)$).
The ray spanned by $e_1$ belongs to this cone because of the existence of stacked cubical $d$-polytopes.
Thus $\mathcal{C}_d$ contains $\mathcal{A}_d$, and \autoref{thm:cones} is proved.

\section{No obvious cubical GLBT}\label{sec:stack}

In~\cite{BabsBC97}, after introducing the definitions of cubical stackedness and cubical neighborliness, the authors show that cubical $1$-stacked $d$-polytopes with at least $n$ vertices exist, for any $n\geq 2^d$ (see~\cite[Corollary 5.6]{BabsBC97}). It is also shown (see~\cite[proof of Proposition 5.5]{BabsBC97}) that if $Q$ is a cubical $k$-stacked $d$-polytope, then $\gc_i(Q)=0$ for $k<i\leq\dhalf{d}$. The converse claim, namely, that $\gc_{k+1}(Q)=0$ implies that $Q$ is cubical $k$-stacked, is false, as our analysis of $Q(k,d,n)$ below shows. This is in apparent contrast with the simplicial GLBT.

\begin{theorem}\label{prop:notStacked}
For any $k\ge 1$ and $n\ge d\ge 2k+4$, the polytope $Q=Q(k,d,n)$ is cubical $k$-neighborly but not cubical $(k+1)$-stacked,
although $\gc_{k+2}(Q)=0$.
In fact, it is not $j$-stacked for any $k+1 \le j \le \dhalf{d}-1$.
\end{theorem}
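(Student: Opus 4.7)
The plan is to handle the two parts of the theorem separately.

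First, for cubical $k$-neighborliness: Lemma~\ref{lem:gMW} gives that the polytope $P=\mw(2k,d-2,n-1)$ used in constructing $Q=Q(k,d,n)=Q(P,<)$ is $k$-neighborly, and the Sanyal--Ziegler construction summarized in \autoref{sec:SZ} then immediately yields that the $k$-skeleton of $Q$ is isomorphic to the $k$-skeleton of the $n$-cube. Combined with Theorem~\ref{t:gc_Q}, which gives $g^c_{k+2}(Q)=0$, this settles every claim of the theorem except the non-$j$-stackedness assertion.

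For the non-$j$-stackedness claim, I would argue by contradiction. Suppose $\mathcal{S}$ is a cubical subdivision of $Q$ witnessing $j$-stackedness for some $k+1\le j\le\dhalf{d}-1$, so that every interior face of $\mathcal{S}$ has dimension at least $d-j\ge\lceil d/2\rceil+1\ge k+3$. Consequently $\vertices(\mathcal{S})=\vertices(Q)$, and more generally the $(d-j-1)$-skeleton of $\mathcal{S}$ coincides with that of $\partial Q$. For each vertex $v\in\vertices(Q)$, the link $\lk_v(\mathcal{S})$ is then a simplicial $(d-1)$-ball triangulating $\vfig{Q}{v}\cong D_{a(v)}$ (by Lemma~\ref{lem:SZ}), coinciding with $\partial D_{a(v)}$ on its boundary and with no interior simplex of dimension less than $d-1-j$.

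The main obstacle is to derive a contradiction from this local-to-global setup. Note that $g_{j+1}(D_a)=0$ for all $a$ whenever $j\ge k+1$ by Propositions~\ref{prop:D_1_g-vector} and~\ref{prop:D_a_g-vector}, so there is no isolated local obstruction: each $D_{a(v)}$ admits a simplicial triangulation of the required type in isolation. Moreover, the cubical $g$-vector of $Q$, vanishing in degrees $>k+1$ by Theorem~\ref{t:gc_Q}, imposes no direct obstruction either. The contradiction must therefore come from global compatibility: the triangulations induced on different vertex figures must restrict to consistent subdivisions on the shared simplicial links along edges of $Q$. My plan is to exhibit a specific incompatibility by focusing on an edge $e=\{u,w\}$ of $Q$ where the two endpoints' diamond types differ sharply --- for instance, an edge joining the vertex with sign vector $(+,-,\ldots,-)$ (where $a(u)=1$ and $g_{k+1}(D_1)>0$) to the ``all-minus'' vertex (where $a(w)=n-d+1$ and $g_{k+1}(D_{n-d+1})=0$) --- and arguing that the induced simplicial triangulations at the two ends cannot agree on the common link $\lk_w(\lk_u(\mathcal{S}))=\lk_u(\lk_w(\mathcal{S}))$. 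The explicit correspondence in Lemma~\ref{lem:SZ} between sign vectors $\sigma\in\{+,-\}^n$ and diamond types should provide the combinatorial framework for making this incompatibility precise; this is the hard part of the argument.
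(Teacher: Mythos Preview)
Your overall strategy coincides with the paper's: assume a cubical $j$-stacked subdivision, pass to the induced simplicial balls $\lk_v(\mathcal{S})$ triangulating the diamonds $D_{a(v)}$, and derive a contradiction from incompatibility along an edge. The edge you single out is in fact an instance of the edge the paper uses. However, two key ideas are missing, and without them the ``hard part'' you defer cannot be carried out.

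\medskip
\textbf{Uniqueness of the induced triangulation.} You observe that $g_{j+1}(D_a)=0$ and conclude only that each $D_a$ \emph{admits} a $j$-stacked triangulation. The crucial point you do not invoke is that, by the simplicial GLBT (specifically the characterisation in \cite[Thm.~2.3(ii)]{MuraN13} or \cite{Bagchi-Datta}), such a triangulation is \emph{unique}: it is the complex $S(D_a)$ whose facets are the $d$-subsets of $\vertices(D_a)$ whose $(d-j-2)$-skeleton lies in $\partial D_a$. Hence $\lk_v(\mathcal{S})$ is not merely some $j$-stacked ball, it is forced to equal $S(D_{a(v)})$, and is therefore completely determined by the missing $(k+1)$- and $(k+2)$-faces of $D_{a(v)}$. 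This is what turns the vague ``compare triangulations at two ends'' into a concrete combinatorial check. The paper then invests real work (Lemma~\ref{lem:MWcommuteLEX}, Propositions~\ref{prop:missing_faces} and~\ref{prop:d-faces}) to list these missing faces and the facets of $S(D_a)$ explicitly; your proposal contains no analogue of this step.

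\medskip
\textbf{Identifying cubes with coordinate subsets.} To compare the links at the two endpoints of an edge you need a common labelling of the $d$-cubes of $\mathcal{S}$. The paper obtains this via Lemma~\ref{lem:cubulationBySubcubes}: since $\mathcal{S}$ has no interior edges, every cube of $\mathcal{S}$ is a face of the ambient $n$-cube $C_n$, hence is encoded by a $d$-subset of $[n]$. Under the isomorphism of Lemma~\ref{lem:SZ}, a facet of $S(D_a)$ at $v_\sigma$ and the corresponding facet at the neighbour $v_{\sigma'}$ must be the \emph{same} coordinate set. One then exhibits a type~(II) facet at $v_\sigma$ (with $a<n-d+1$) whose coordinate set matches neither type~(I) nor type~(II) at $v_{\sigma'}$ (where $b>a$). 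Your heuristic based on the discrepancy $g_{k+1}(D_1)>0$ versus $g_{k+1}(D_{n-d+1})=0$ points in the right direction, but does not by itself produce the contradiction; the argument is combinatorial in the explicit facet description, not numerical in the $g$-vector.
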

\begin{proof}
The polytope $Q$ is cubical $k$-neighborly by~\cite[Theorem 16]{JoswZ00} or~\cite[Theorem 3.2]{SanyZ10}; this also follows from our explicit computation of its (short) cubical $g$-vector.
By Theorem~\ref{t:gc_Q}, $\gc_{k+2}(Q)=0$.
Assume by contradiction that $Q$ is cubical $(k+1)$-stacked, so $Q$ has some cubulation $Q'$, namely a subdivision into (combinatorial) cubes, without interior $(d-k-2)$-faces.
Let $C_n$ be the deformed $n$-cube that $Q$ is a projection of.
\begin{lemma}\label{lem:cubulationBySubcubes}
All faces of $Q'$ must be faces of $C_n$.
\end{lemma}
\begin{proof}
Consider an $m$-cube $F$ of $Q'$. Since $Q'$ has no interior $1$-faces, the $1$-skeleton of $F$ is in $Q$, and so also in $C_n$. Now, any $1$-dimensional subcomplex of $C_n$ which is isomorphic to the graph of an $m$-cube, is the $1$-skeleton of an $m$-face of $C_n$.
\end{proof}

Next, the cubulation $Q'$ induces a set of \emph{compatible} triangulations of the vertex figures. We extend the notation for the boundary complex of the vertex figure in a polytope, and denote by $\lk_v(B)$ the simplicial complex whose face lattice is the ideal above the vertex $v$ in the face lattice of $B$, with $B$ a simplicial complex or a cubical complex.

Each $\lk_v(Q')$ is a triangulation of $\vfig{Q}{v}$ with no interior $(d-k-3)$-faces.
Thus the vertex figure of $Q$ at $v$ --- isomorphic to some diamond $D_a$ --- is $(k+1)$-stacked, and by the simplicial GLBT (an easy part of it, see \cite[Thm.2.3(ii)]{MuraN13} or \cite[Thm.2.12]{Bagchi-Datta}), $\lk_v(Q')$ is the unique triangulation obtained from the diamond $D_a$  by inserting all
$(d-1)$-simplices whose $(d-k-3)$-skeleton is contained in the boundary of $D_a$.
(We abuse notation and identify $\partial D_a$ with $\lk_v(Q)$ by the isomorphism given in Lemma~\ref{lem:SZ}.)
This description allows us to determine, for each vertex $v$ of $Q$, the list of $d$-cubes in $Q'$ that contain $v$. The compatibility condition mentioned above is the requirement that if $u$ is a vertex in a $d$-cube from the list of $v$, then the list of $u$ must contain this $d$-cube too.
We show, in steps, that the $(k+1)$-stacked triangulations of the diamonds $D_a$ are incompatible, thus such a cubulation $Q'$ cannot exist.

In order to identify the facets in the unique $(k+1)$-stacked triangulation $S(D_a)$ of $D_a$, we first identify the \emph{missing faces} of $D_a$, namely its minimal non-faces.
Kalai~\cite{Kalai:missing} and Nagel~\cite{Nagel:missing} showed that if $g_i(R)=0$ for a simplicial $j$-polytope $R$, then $R$ has no missing $\ell$-faces for $i\le \ell\le j-i$.
Combined with the simplicial GLBT, a subset $A$ of $d$ vertices of $D_a$ is a facet in $S(D_a)$ iff all its subsets $B$ of size $\leq k+2$ are faces of $D_a$. Since $D_a$ is $k$-neighborly, it is enough to determine the missing faces of $D_a$ of sizes $k+1$ and $k+2$.
To analyze these missing faces of $D_a$ we make an essential use of the following observation:
let $P=\mw(2k,d-2,m)$, and recall Definition~\ref{def:MW}, where $P$ is constructed from the cyclic polytope $C=C(2k,m-d+2+2k)$ and the $(d-2-2k)$-simplex $T$.
\begin{lemma}\label{lem:MWcommuteLEX}
The operations $\mw$ and $\lex$ commute:
	\begin{equation}\label{eq:MWcommuteLEX}
	\lex_a(P)=\langle T\rangle \ast \lk_x(\lex_a(C))\bigcup_{\partial T\ast \lk_x(\lex_a(C))}\partial T\ast \anst_x(\lex_a(C)).
	\end{equation}
\end{lemma}
\begin{proof}
	 By the construction of $P$, for a subset $F$ of $\vertices(P)$ (so that $x\notin F$), we have:
	\begin{itemize}
		\item $F$ is a face of $P$, with $\vertices(T) \subseteq \vertices(F)$, iff $(\vertices(F) \setminus \vertices(T) ) \cup \{x\}$ is the set of vertices of a face $\bar{F}$ of $C$, and
		\item $F$ is a face of $P$, with $\vertices(T) \not\subseteq \vertices(F)$, iff $\vertices(F) \setminus \vertices(T)$  is the set of vertices of a face $\bar{F}$ of $C$.
	\end{itemize}
	
	Now, let $F$ be a face added when pulling $v_1$ in $P$. Then $F=v_1\ast F'$, with $F'$ a face of $P$, and $F'$ a face of $P_1=\conv(v_2,...,v_m)$.
With the notation
above, $\bar{F'}$ is a face of both $C$ and $C_1$, and thus $\bar{F}=v_1\ast \bar{F}'$ is added when pulling $v_1$ in $C$.
	Similarly, when pushing $v_1$ in $P$, we add faces $F$ (and $v_1*F$) of $P_1$ which are non-faces of $P$, and so $\bar{F}$ is a face of $C_1$, and a non-face of $C$, and thus is added when pushing $v_1$ in $C$.
	Reversing the argument shows that faces added when pushing/pulling $v_1$ in $C$ give corresponding added faces in pushing/pulling in $P$.
	When constructing $\lex_a(P)$ and $\lex_a(C)$ sequentially, we always push/pull $v_1$ in a suitable $\mw$-polytope and the corresponding suitable cyclic polytope $C$, hence \eqref{eq:MWcommuteLEX} follows.
\end{proof}

We need some terminology, to be used in the following two propositions:
let $G$ be a subset of the set $\{v_1, \ldots, v_{n-d+2k+1}\}$ of vertices of the cyclic polytope $C$. A {\bf block} $B \subseteq G$ is a maximal (with respect to inclusion) subset of $G$ consisting of consecutive vertices, $B$ is {\bf inner} if $\min B>v_1$ and $\max B<v_{n-d+2k+1}=x$, and $B$ is {\bf even} / {\bf odd} if its size is. We call a subset of vertices of $C$ {\bf isolated} if no two of them are consecutive. Recall that $p$ denotes the apex of the diamond $D_a$ of $P=MW(2k,d-2,n-1)$.

\begin{proposition}\label{prop:missing_faces}
For any $1\le a\le n-d+1$, the missing faces in $D_a$ of size $k+1$ or $k+2$ are exactly the sets of the following two types:
\begin{enumerate}[(1)]
	\item $F=\{p\} \cup F'$, with $F'$ of size $k+1$, consisting of isolated vertices of $C$, $x\notin F'$, and $\min F'=a$; or
	\item $F$ is of size $k+1$, consisting of isolated vertices of $C$, $x\notin F$, $\min F\neq a$.
\end{enumerate}
\end{proposition}
\begin{proof}
Any missing face $F$ of $D_a$ must contain a missing face of $P$, and
is either of the form

(i) $\{p\} \cup F'$ where $F'$ is an interior face of the ball $\lex_a(P)$, or

(ii) a missing face of $\partial P$ which is not a face of $\lex_a(P)$, or 

(iii) a missing face of $\lex_a(P)$ containing a missing face $F'$ of $\partial P$ as a strict subset.

Using Lemma~\ref{lem:MWcommuteLEX} we can replace $P$ by $C$ in (i), (ii) and (iii), and $F$ contains a missing face of $C$ not containing $x$.

By the Gale evenness criterion, a subset of $i$ vertices of $C$ forms a face of $C$ iff it contains at most $2k-i$ inner odd blocks. It follows that the missing faces of $C$ are exactly the sets of $k+1$ isolated vertices, containing at most one element of $\{v_1,x\}$.

Let $F'$ be a missing face of $C$, $x\notin F'$, with $\min F'=v_i$, and denote $C_i=\conv\set{v_j}{j\geq i}$, a cyclic polytope contained in $C$.

If $i=a$ then $F'$ is a face in $\lex_a(C_a)$, thus $F'$ is also in $\lex_a(C)$, yielding a missing face $F=F' \cup \{v_a\}$ of $D_a$ of type (i).

If $i>a$, then $F'$ is a missing face in $\lex_a(C)$, hence $F=F'$ is a missing face of $D_a$ of type (ii).

If $i<a$, $C_{i+1}$ blocks $v_i$ from seeing $F'\setminus v_i$, so $F'$ is missing in $\lex_a(C)$, and so $F=F'$ is a missing face of $D_a$ of type (ii).

Now, we will show that missing faces of type (iii) do not exist, and thus the characterization in the statement will follow.
Indeed, assume $F$ to be of type (iii), and denote $\{z\}= F\setminus F'$ (indeed this difference set must be a singleton as $|F'|=k+1<|F|\le k+2$ and $F'\subseteq F$).
As $F'\in\lex_a(C)$, the discussion above shows $\min(F')=v_a$.
If $z\neq \min F$, then $\min F=v_a$, and $z$ is adjacent to some vertex of $F'$ (else $F\setminus\{v_a\}$ would be missing in $\lex_a(C)$).
But there is no missing face containing $v_a$ in $\lex_a(C_a)$
so $F$ is in $\lex_a(C_{a})$, hence also in $\lex_a(C)$, a contradiction.
If $z=\min F$, then $F\setminus \{v_a\}$ is a missing face of $D_a$ of type (ii), a contradiction.

To conclude, note that the items (i) and (ii), with $C$ replacing $P$, are restatements of (1) and (2), respectively.
\end{proof}

Proposition~\ref{prop:missing_faces} and the discussion preceding Lemma~\ref{lem:MWcommuteLEX} imply the following description of the $(d-1)$-faces of the $(k+1)$-stacked triangulation $S(D_a)$.

\begin{proposition}\label{prop:d-faces}
The sets of $d$ vertices that form the $(d-1)$-simplices of the $(k+1)$-stacked triangulation $S(D_a)$ of $D_a$ are exactly the sets of the following two types:
\begin{enumerate}[(I)]
	\item $\{p\}\cup \left\{2k \text{-set in } C\setminus \{x\}\text{ consisting of even blocks}\right\}\cup\, \vertices(T)$, or
	\item $\{a\}\cup \left\{2k \text{-set $F$ in } C\setminus \{x\}\text{ consisting of even blocks}, \min F>a\right\}\cup\, \vertices(T)$.
\end{enumerate}
\end{proposition}
\begin{proof}
Let $Y$ be a $d$-set that forms a facet of $S(D_a)$. Then $Y$ cannot contain more than $2k+1$ vertices of $C\setminus \{x\}$, because then it would contain two disjoint missing faces of $C$, one of which must be of type (2). Clearly, $Y$ has to contain at least $2k$ vertices of $C\setminus \{x\}$. So, by Proposition~\ref{prop:missing_faces}, $Y$ is either of the form
\begin{itemize}
	\item $2k$ vertices of $C\setminus \{x\}$, the apex $p$, and all of the vertices of $T$, or
	\item $2k+1$ vertices of $C\setminus \{x\}$, and all of the vertices of $T$.
\end{itemize}

If $Y$ has $2k+1$ vertices of $C\setminus \{x\}$, then it contains $k+1$ isolated vertices of $C\setminus \{x\}$, so $\min Y=a$, since $Y$ contains no missing faces of type (2), and $p\notin Y$ since $Y$ contains no missing faces of type (1). These are the $d$-sets of type (II) in the statement. Otherwise, $Y$ has $2k$ vertices of $C\setminus \{x\}$, and hence must contain $p$ and all vertices of $T$, and since $Y$ does not contain missing faces of type (1) or (2), it must be of type (I).
\end{proof}

We are now ready to complete the proof of Theorem~\ref{prop:notStacked}.
Consider a vertex $v_{\sigma}$ of $Q$ with $\sigma\in\{+,-\}^n$, and with $a=\min\{i\mid \sigma_i=+\}< n-d+1$.
Lemma~\ref{lem:SZ} says that $\vfig{Q}{v_{\sigma}}\cong D_a$, with the vertex $v_i\in D_a$ corresponding to the vertex in $\vfig{Q}{v_{\sigma}}$ obtained by flipping the $(i+1)$-th coordinate of $\sigma$. Consider $v_{\sigma'}$, where $\sigma'$ is obtained by flipping the $a$-th coordinate of $\sigma$. Then $v_{\sigma'}$ is a neighbor of $v_{\sigma}$ with $\vfig{Q}{v_{\sigma'}}\cong D_b$, for some $b>a$.

By Proposition~\ref{prop:d-faces}, there is a $(d-1)$-face $F$ in $S(D_a)$ of type (II) (e.g., consisting of $a$, $\vertices(T)$, and a single $2k$-block in $C \setminus \{x\}$). It corresponds to a $d$-cube $D$ in $Q'$. The same $d$-set of coordinates $F$ corresponds to the face of $S(D_b)\cong\lk_{v_{\sigma'}}(Q')$ which corresponds to $D\in Q'$. But $F$ is of neither type (I) nor type (II) in $D_b$, and so the triangulations are incompatible.

This completes the proof that $Q$ is not $(k+1)$-stacked.
For $k+1<j\le \dhalf{d}-1$, if $Q$ were $j$-stacked, then the induced $j$-stacked triangulation of any diamond $D_a$ would again be the same $S(D_a)$ given by the GLBT, and thus the analysis above yields a contradiction as before.
\end{proof}

\section{Concluding remarks}\label{sec:concluding_remarks}

\subsection{Cubical $k$-stacked $d$-polytopes}
As mentioned in Theorem~\ref{prop:notStacked}, $Q(k,d,n)$ is not cubical $j$-stacked for any $k+1\leq j\leq \dhalf{d}-1$. It is then natural to ask:
do cubical $k$-stacked, and not $(k-1)$-stacked, $d$-polytopes actually exist?

The {\em $k$-elementary} cubical $d$-polytopes $C^d_k$ constructed for any $0\leq k\leq d$ by Blind and Blind in~\cite{BlinB95} provide a positive answer to this question. 
For $1 \le k \le \lfloor d/2 \rfloor$, $C^d_k$ is $k$-stacked by construction and its $\gc$-vector is
\[
g^c_i(C^d_k)
= \sum_{j=1}^{k} 2^{d-j} \binom{j-1}{i-1}
\qquad (1 \le i \le \lfloor d/2 \rfloor)
\]
so that, in particular, $g^c_k(C^d_k) = 2^{d-k} > 0$ and hence $C^d_k$ is not $(k-1)$-stacked.

The following question, implicit in~\cite{BabsBC97}, asks for a sequence of cubical $k$-stacked $d$-polytopes with $\gc$-vector approaching the ray spanned by $e_k$. It is still unanswered.
\begin{question}\label{Q:existBBC?}
Let $2\leq k\leq\dhalf{d}-1$.
Does there exist a sequence of cubical $k$-stacked $d$-polytopes with dominant $k$-th coordinate $g^c_k$ of the cubical $g$-vector?
\end{question}

\subsection{The Cubical Lower Bound Conjecture}
Jockusch studied the lower and upper bound problems for cubical polytopes in~\cite{Jock93}, where he stated a {\em Cubical Lower Bound Conjecture}, rephrased as follows:
\begin{conjecture}[CLBC, \cite{Jock93}]\label{conj:JCLBC}
	Let $Q$ be a cubical $d$-polytope with $n$ vertices. Then
	\begin{align*}
	f_k(Q) &\ge \left[ \binom{d}{k} + \binom{d-1}{k-1} \right] 2^{-k} n - \binom{d-1}{k-1} 2^{d-k} \quad (1 \le k \le d-2),\\
	f_{d-1}(Q) &\ge (2d-2) 2^{-(d-1)} n - (2d-4).
	\end{align*}
\end{conjecture}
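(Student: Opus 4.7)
My plan is to reduce the entire Cubical Lower Bound Conjecture to its $k=1$ case (the sharp edge bound $\hsc_1(Q)\ge 2n-2^d$) using standard machinery, and then to attack the remaining edge bound by rigidity-theoretic or combinatorial methods.

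For the reduction, first invert the short cubical $h$-polynomial to obtain
$$2^k f_k(Q)=\sum_{i=0}^{k}\binom{d-1-i}{k-i}\hsc_i(Q).$$
By Hetyei's observation~\eqref{eq:Hetyei}, each $\hsc_i(Q)=\sum_{v\in\vertices(Q)}h_i(\lk_v(Q))$, and the vertex links are boundary complexes of simplicial $(d-1)$-polytopes. Barnette's Lower Bound Theorem applied link-by-link (together with Dehn--Sommerville on the link) gives $h_i(\lk_v(Q))\ge h_1(\lk_v(Q))$ for $1\le i\le d-2$, whence $\hsc_i(Q)\ge\hsc_1(Q)$ in the same range; Dehn--Sommerville on each link also forces $\hsc_0(Q)=\hsc_{d-1}(Q)=n$. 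Substituting $\hsc_0=\hsc_{d-1}=n$ together with the hoped-for bound $\hsc_i\ge 2n-2^d$ (for $1\le i\le d-2$) into the inversion formula, and using the hockey-stick identity $\sum_{i=1}^{k}\binom{d-1-i}{k-i}=\binom{d-1}{k-1}$ together with $\binom{d-1}{k}+\binom{d-1}{k-1}=\binom{d}{k}$, recovers Jockusch's bound on $f_k(Q)$ for $1\le k\le d-2$ on the nose; the $f_{d-1}$ inequality follows analogously, using $\hsc_{d-1}=n$ (rather than the Barnette-type bound) at the top slot. Thus the full CLBC collapses to the single statement $\sum_v\deg_Q(v)\ge (d+1)n-2^d$, i.e.\ that the total excess vertex degree of a cubical $d$-polytope is at least $n-2^d$, a statement that is sharp at the $d$-cube.

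The hard part is this remaining edge bound, which is the cubical analogue of the core content of Barnette's LBT. I would attempt the analogue of Kalai's rigidity proof: view the $1$-skeleton of $Q$ as a bar-joint framework in $\R^d$ and try to show generic infinitesimal rigidity. Naive rigidity yields only $f_1(Q)\ge dn-\binom{d+1}{2}$, which is the wrong slope, so one has to enrich the framework with additional relations coming from the square $2$-faces (for instance a diagonal of each square, or a suitably defined ``cubical rigidity matroid'') in order to push the asymptotic edge slope from $d/2$ up to $(d+1)/2$. A complementary combinatorial angle exploits the fact that for $d\ge 4$ the only simple cubical $d$-polytope is the $d$-cube (Blind--Mani / Joswig): if one could show that the subgraph of $Q$ spanned by simple vertices embeds in the $1$-skeleton of the $d$-cube, then the number of simple vertices would be bounded by $2^d$, and the excess degree of the remaining vertices would deliver the bound. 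Either route requires a genuinely new input specific to the cubical setting; that is where I expect the main obstacle, and where the novelty of any future proof will have to reside.
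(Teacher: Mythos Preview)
The statement you are attempting is a \emph{conjecture}, and the paper does not prove it. All the paper records (following Jockusch) is that, by a cubical analogue of the MPW reduction, the full family of inequalities is equivalent to its $k=1$ case, which the paper then restates as $\gc_2(Q)\ge 0$ (Conjecture~\ref{conj:CLBC}). The first half of your plan reproduces exactly this: your inversion $2^k f_k=\sum_{i\le k}\binom{d-1-i}{k-i}\hsc_i$, together with $\hsc_0=\hsc_{d-1}=n$ and the linkwise inequality $\hsc_i\ge\hsc_1$ for $1\le i\le d-2$, correctly collapses all of Jockusch's inequalities to the single bound $\hsc_1\ge 2n-2^d$, which is equivalent to $\gc_2\ge 0$. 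One small correction: the inequality $h_i(\lk_v(Q))\ge h_1(\lk_v(Q))$ for \emph{all} $1\le i\le d-2$ is the nonnegativity of the full simplicial $g$-vector (Stanley's part of the $g$-theorem, i.e.\ the simplicial GLBT), not Barnette's LBT per se, which only yields the case $i=2$.

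The genuine gap is the second half. The inequality $\hsc_1(Q)\ge 2n-2^d$, equivalently $\gc_2(Q)\ge 0$, is open, and neither of your suggested routes closes it. You already observe that naive infinitesimal rigidity in $\R^d$ gives slope $d/2$ rather than the required $(d+1)/2$; adding a diagonal in each square face is not known to produce a framework (or a matroid) of the needed rank, and no such ``cubical rigidity'' theory currently delivers the bound. Your alternative --- bounding the number of simple vertices by $2^d$ via an embedding of the simple-vertex subgraph into the graph of the $d$-cube --- is itself an unproved assertion. You concede this yourself (``either route requires a genuinely new input''). So your proposal does not prove the conjecture; it correctly isolates the hard kernel $\gc_2\ge 0$ and leaves it open, which is precisely the state of affairs the paper reports.
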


Jockusch remarks that the CLBC for edges is as strong as the general statement. This reduction is a cubical analog of the {\em MPW-reduction} (see \cite[Theorem~1]{Barn73}), and it follows that Conjecture~\ref{conj:JCLBC} is equivalent to the $k=1$ case, which in turn can be rewritten as
\begin{conjecture}[CLBC]\label{conj:CLBC}
	If $Q$ is a cubical $d$-polytope with $n$ vertices then
	\begin{equation*}\gc_2(Q)\geq 0.\end{equation*}
\end{conjecture}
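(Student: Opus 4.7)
The plan is to reduce \autoref{conj:CLBC} to a statement about vertex links and then attempt a rigidity attack. First, I would rewrite $\gc_2(Q)$ explicitly. From $\hsc_0(Q)=f_0(Q)=n$ and $\hc_0(Q)=2^{d-1}$, the relation $\hsc_0=\hc_0+\hc_1$ gives $\hc_1(Q)=n-2^{d-1}$. For each vertex $v$ of $Q$ the link $\lk_v(Q)$ is a simplicial $(d-2)$-sphere, so $h_1(\lk_v(Q))=\deg_Q(v)-(d-1)$, and Hetyei's identity~\eqref{eq:Hetyei} yields $\hsc_1(Q)=2f_1(Q)-(d-1)n$. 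Combined with $\hsc_1=\hc_1+\hc_2$ this produces $\hc_2(Q)=2f_1(Q)-dn+2^{d-1}$, hence
\[
\gc_2(Q) \;=\; \hc_2(Q)-\hc_1(Q) \;=\; 2f_1(Q)-(d+1)n+2^d \;=\; \sum_{v\in\vertices(Q)} g_1(\lk_v(Q))\;-\;(n-2^d).
\]
So \autoref{conj:CLBC} is equivalent to the inequality $\sum_v g_1(\lk_v(Q))\ge n-2^d$.

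Second, each $\lk_v(Q)$ bounds the simplicial $(d-1)$-polytope $\vfig{Q}{v}$, so Barnette's lower bound theorem gives $g_1(\lk_v(Q))\ge 0$, with equality exactly when $v$ is a ``cube-corner'' meeting only $d$ edges of $Q$. Since every cubical $d$-polytope has $n\ge 2^d$ with equality only for the cube, the extremal case is exactly the cube, and the task becomes showing that each extra vertex beyond $2^d$ is accompanied on average by (at least) one extra edge. To produce these extra edges I would attempt a rigidity argument: prove that the $1$-skeleton of a cubical $d$-polytope, in its natural embedding in $\R^d$, is infinitesimally rigid. Rigidity would imply $f_1(Q)\ge dn-\binom{d+1}{2}$, which for $n\ge 2^d$ and $d\ge 4$ is strictly stronger than what is needed. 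A natural route is to adapt Kalai's stress-space proof for simplicial polytopes via the Sanyal--Ziegler construction (\autoref{sec:SZ}): realize $Q=\pi_d(\overline{C})$ as a projection of a deformed $n$-cube and transport rigidity from the cube, or alternatively triangulate $Q$ and deduce rigidity from that of the simplicial refinement while controlling the diagonal edges removed on passing back to $Q$.

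The main obstacle, and the reason the conjecture is still open, is precisely this rigidity transfer. Non-simplicial faces of $Q$ admit internal flexes that the simplicial stress-space machinery cannot rule out, and removing diagonals from a rigid triangulation can in principle destroy rigidity. Combinatorial inductions face a parallel difficulty: natural operations such as edge contraction or vertex deletion do not in general preserve cubicality, so one cannot reduce $n$ while staying in the class. As a potential workaround I would try to identify, for each excess vertex beyond $2^d$, a distinct extra edge by a pigeonhole argument on the Sanyal--Ziegler diamonds $D_a$ (\autoref{sec:SZ}) attached to vertex figures of different combinatorial types. Short of that, a full resolution will likely require either a new cubical rigidity theorem or a genuinely cubical stress space refining the link-wise estimates above.
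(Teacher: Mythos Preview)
The statement is labelled a \emph{conjecture} in the paper, and the paper offers no proof of it; it appears in the concluding remarks only as an equivalent reformulation of Jockusch's Cubical Lower Bound Conjecture. There is therefore no ``paper's own proof'' to compare against, and your proposal is not a proof either --- as you explicitly acknowledge, ``the conjecture is still open'' and the rigidity transfer you would need is exactly the missing ingredient.

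Your preliminary computation $\gc_2(Q)=2f_1(Q)-(d+1)n+2^d=\sum_v g_1(\lk_v(Q))-(n-2^d)$ is correct, and is essentially what the paper means by saying that the $k=1$ case of Conjecture~\ref{conj:JCLBC} can be rewritten as $\gc_2(Q)\ge 0$. One concrete flaw in the sketched attack, though: you suggest to ``realize $Q=\pi_d(\overline{C})$ as a projection of a deformed $n$-cube'' via the Sanyal--Ziegler construction and transport rigidity from there, but that construction produces only a very special family of cubical polytopes (those coming from a simplicial $(d-2)$-polytope with a chosen vertex order). An arbitrary cubical $d$-polytope need not arise this way, so this route cannot establish rigidity --- or the CLBC --- in general. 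Your alternative of triangulating and then removing diagonals faces the difficulty you already note. In short, the proposal is an honest outline of why the problem is hard, not a proof, and the paper claims nothing more.
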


\bibliographystyle{myalpha}
\bibliography{cubical}

\end{document}